\documentclass[12pt]{amsart}


\usepackage[margin=1in]{geometry}  
\usepackage{graphicx}              
\usepackage{amsmath}               
\usepackage{amsfonts}              
\usepackage{amsthm}                
\usepackage{amsmath, amssymb}
\usepackage[all]{xy}
\usepackage{enumerate}
\usepackage{graphicx}
\usepackage{color}
\usepackage{subcaption}

\newtheorem{thm}{Theorem}[section]
\newtheorem{lem}[thm]{Lemma}

\newtheorem{cor}[thm]{Corollary}
\newtheorem{conj}[thm]{Conjecture}

\theoremstyle{definition}
\newtheorem{remark}[thm]{Remark}

\theoremstyle{definition}
\newtheorem{defn}[thm]{Definition}

\theoremstyle{definition}
\newtheorem{exmp}[thm]{Example}

\newcommand{\Le}{\textup{\protect\scalebox{-1}[1]{L}}}

\begin{document}

\title{Green-to-red Sequences for Positroids} 
\author{Nicolas Ford}
\address{Department of Mathematics, University of California, Berkeley, 
CA 94720-3840, USA}
\email{nicf@math.berkeley.edu}

\author{Khrystyna Serhiyenko}\thanks{The second author was  supported by the NSF Postdoctoral fellowship MSPRF-1502881.}
\address{Department of Mathematics, University of California, Berkeley, 
CA 94720-3840, USA}
\email{khrystyna.serhiyenko@berkeley.edu}

\begin{abstract}
\Le-diagrams are combinatorial objects that parametrize cells of the totally nonnegative Grassmannian, called positroid cells, and each \Le-diagram gives rise to a cluster algebra which is believed to be isomorphic to the coordinate ring of the corresponding positroid variety.  We study quivers arising from these diagrams and show that they can be constructed from the well-behaved quivers associated to Grassmannians by deleting and merging certain vertices.  Then, we prove that quivers coming from arbitrary \Le-diagrams, and more generally reduced plabic graphs, admit a particular sequence of mutations called a green-to-red sequence.      
\end{abstract}

\maketitle

\section{Introduction}

Since their introduction in 2001, a lot of attention has been paid to Fomin and Zelevinsky's cluster algebras, a class of commutative rings with a distinguished set of generators which are related to each other by a collection of relatively simple combinatorial rules.  Many previously well-studied rings, including Grassmannians and double Bruhat cells in flag varieties \cite{S,FZ3}, turn out to be cluster algebras.  Cluster algebras come with a great deal of extra structure, such as a canonical basis and a well-behaved notion of positivity.

A part of the combinatorial framework of many cluster algebras is encoded in an operation called \emph{quiver mutation}, which is a simple transformation performed on a directed graph.  In the cases where it is relevant, numerous properties of the cluster algebra can be read off from the quiver alone.  For example, skew-symmetric cluster algebras of finite type and finite mutation type are classified via their quivers, see \cite{FST, FZ2}.

Another important characteristic of a quiver is the existence of a particular sequence of mutations, called a \emph{green-to-red sequence}.  It was introduced in \cite{K}, as a combinatorial tool for computing the refined Donaldson-Thomas invariants. The existence of such sequence has important consequences \cite[Sec. 4.4]{M} for the structure of the \emph{scattering diagram}, a recent construction appearing in \cite{GHKK} which was instrumental in resolving several long-standing conjectures in the field.  In particular, it follows that the Enough Global Monomials property of \cite{GHKK} holds whenever these sequences exist.  
Furthermore, in all known cases quivers admitting a {\it maximal green sequence}, a particular type of a green-to-red sequence, also have the property that their cluster algebra agrees with their upper cluster algebra.  See \cite{CLS} for a detailed overview, however a precise relationship between the two concepts remains unclear.  Later, it was shown in \cite{M} that the existence of such sequence is not invariant under mutation, even though the cluster algebra remains the same. On the other hand, the existence of a green-to-red sequence is preserved under mutation \cite{M} which offers a more precise characterization of the corresponding cluster algebra.  

Next, we briefly recall some of the relevant geometric objects. Given two nonnegative integers $d\le n$ and a field $k$, write $Gr_{d,n}(k)$ for the Grassmannian of $d$-planes in $k^n$. Given a $d$-plane $V\subseteq k^n$, we may represent it as an $n\times d$ matrix $M$ by choosing any linear map $k^d\to k^n$ whose image is $V$. Recall that, given any $d$-element subset $S$ of the set $\{1,\ldots,n\}$, we may define the corresponding \emph{Pl\"ucker coordinate} by taking the determinant of the submatrix of $M$ containing only the rows in $S$. Choosing a different $M$ to represent the same $d$-plane $V$ amounts to multiplying $M$ on the right by an element of $GL_d$, which in turn multiplies all the Pl\"ucker coordinates by the same nonzero scalar. The Pl\"ucker coordinates thereby comprise a set of projective coordinates on the Grassmannian itself.

Given any point in the Grassmannian, we say the \emph{matroid} of that point is the set of Pl\"ucker coordinates that are nonzero, and the corresponding \emph{open matroid variety} is the set of all points of the Grassmannian with the same matroid as the chosen point. The closure of the open matroid variety is just called the \emph{matroid variety}. 

When $k=\mathbb R$, we can speak of the \emph{totally non-negative Grassmannian}, which is the set of all points of $Gr_{d,n}(\mathbb R)$ for which all Pl\"ucker coordinates are nonnegative. The special matroid of a point in the totally non-negative Grassmannian is called a \emph{positroid}, and the corresponding matroid variety is called a \emph{positroid variety}. For more information on these varieties, see \cite{KLS}.

The cluster algebras, we discuss in this paper, arise from certain combinatorial objects and they are believed to give a cluster structure to positroid varieties.  

\begin{conj}[\cite{MuSp}, 3.4]\label{*}
The coordinate ring of an open positroid variety has a cluster structure obtained from the quiver associated to any of its reduced plabic graphs.
\end{conj}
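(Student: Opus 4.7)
The plan is to combine the green-to-red sequence result of this paper with the general framework relating cluster algebras, upper cluster algebras, and coordinate rings of positroid varieties. The overall strategy proceeds in three steps: first, identify a candidate cluster structure whose initial cluster consists of explicit regular functions on the open positroid variety; second, use the green-to-red sequence together with the scattering-diagram machinery of GHKK to show that the cluster algebra $\mathcal{A}$ coincides with its upper cluster algebra $\mathcal{U}$; and third, match $\mathcal{U}$ with the coordinate ring along the lines of Muller--Speyer.

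For the first step, I would associate to each face of a fixed reduced plabic graph $G$ an explicit regular function on the open positroid variety, obtained from Postnikov's boundary measurement map; boundary faces give the frozen variables. Verifying that square moves of $G$ correspond to quiver mutations and that the exchange relations hold as algebraic identities builds on Scott's cluster structure on $Gr_{d,n}$. Here the description of the positroid quiver as a deletion and merging of the Grassmannian quiver, established earlier in the paper, is especially useful: it allows one to pull back Plucker-type relations from $Gr_{d,n}$ to the positroid setting, and to transport mutation sequences coherently.

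For the second step, by the main result of this paper the quiver admits a green-to-red sequence, and one invokes the scattering-diagram technology of GHKK to derive enough structural information about the cluster algebra (such as the Enough Global Monomials property) to force the equality $\mathcal{A} = \mathcal{U}$. For the third step, $\mathcal{U}$ is identified with the coordinate ring of the open positroid variety by exhibiting the face-label functions as generators and checking that no extra relations hold beyond those imposed by mutations; this is essentially the Muller--Speyer framework adapted to the present cluster structure, and relies on the concrete parametrization of the positroid variety by the boundary measurement map.

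The principal obstacle is closing the implication ``green-to-red sequence $\Rightarrow \mathcal{A} = \mathcal{U}$.'' The introduction notes that only \emph{maximal} green sequences have been verified in all known cases to imply this equality, and that the existence of a maximal green sequence is not even preserved under mutation. A complete proof of the conjecture would therefore most plausibly require either upgrading the sequence produced in this paper to a maximal green sequence for positroid quivers, or giving a direct geometric argument, perhaps through the boundary measurement parametrization or a direct analysis of the scattering diagram, that establishes $\mathcal{A} = \mathcal{U}$ using only the green-to-red data. A secondary but nontrivial task is to verify that the cluster structure does not depend on the choice of reduced plabic graph, which amounts to checking that the various square-move/quiver-mutation identifications glue consistently across the moduli of plabic graphs for the same positroid.
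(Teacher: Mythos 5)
The statement you are trying to prove is Conjecture~\ref{*}, attributed to Muller--Speyer, and the paper does not prove it: it is stated as an open conjecture, the authors' main theorem (existence of a green-to-red sequence for positroid quivers) is offered only as supporting evidence, and their strongest claim in this direction is conditional (``If Conjecture~\ref{*} holds then the coordinate ring \ldots has a canonical basis of theta functions''). So there is no proof in the paper to compare yours against, and your proposal does not close the conjecture either.

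Concretely, two of your steps are genuine gaps rather than verifications. First, the implication ``green-to-red sequence $\Rightarrow \mathcal{A}=\mathcal{U}$'' is not available: as the paper itself emphasizes, a green-to-red sequence yields the Enough Global Monomials property via \cite{GHKK} and \cite{M}, but even for maximal green sequences the precise relationship with $\mathcal{A}=\mathcal{U}$ is unresolved. (In the positroid setting $\mathcal{A}=\mathcal{U}$ does hold, but by local acyclicity, i.e.\ \cite{M2} combined with \cite{MuSp} --- not by anything involving reddening sequences --- so your second step should be rerouted through that result rather than through the scattering diagram.) Second, and more fundamentally, your third step --- identifying $\mathcal{U}$ with the coordinate ring of the open positroid variety by ``exhibiting the face-label functions as generators and checking that no extra relations hold'' --- is precisely the content of the conjecture. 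Checking that the face functions are regular and satisfy the exchange relations only gives an inclusion of the cluster algebra into the coordinate ring; the hard direction is the reverse containment, and nothing in your sketch (boundary measurement map, pullback of Pl\"ucker relations, the deletion-and-merging description of the quiver) supplies a mechanism for it. You correctly flag the first obstacle yourself, but it is the second one that keeps this a conjecture rather than a theorem. The independence of the choice of reduced plabic graph, by contrast, is not a serious issue: move-equivalent reduced plabic graphs have mutation-equivalent quivers, so the resulting cluster structures agree automatically.
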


The conjecture is shown to be true in the case of Grassmannians \cite{S}, the top-dimensional positroid varieties.  More precisely, each positroid gives rise to a family of reduced plabic graphs, which are all mutation equivalent, and exactly one of those graphs corresponds to a \Le-diagram \cite{P}. We show that quivers arising from arbitrary \Le-diagrams can be obtained by merging and deleting certain vertices of quivers associated to Grassmannians.  An explicit green-to-red sequence for the latter set of quivers is constructed in \cite{MS}, and we use its existence to prove the following.

\begin{thm}
Quivers associated to reduced plabic graphs admit a green-to-red sequence. 
\end{thm}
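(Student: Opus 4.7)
The plan is to reduce the statement to Le-diagram quivers and then lift the Muller--Speyer green-to-red sequence from the Grassmannian quiver through the vertex deletions and merges established earlier in the paper.

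First I would observe that it suffices to treat the case of a \Le-diagram. All reduced plabic graphs representing a fixed positroid produce mutation-equivalent quivers, and by Muller's result \cite{M} the existence of a green-to-red sequence is invariant under quiver mutation. Since every such mutation class contains exactly one \Le-diagram \cite{P}, we may assume the quiver $Q_\Le$ in question comes from a \Le-diagram.

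Next I would invoke the structural result proved in the preceding sections: $Q_\Le$ is obtained from a Grassmannian quiver $Q_{Gr}$ by a prescribed sequence of vertex deletions and vertex merges. Let $\mathbf{s}$ denote the explicit green-to-red sequence for $Q_{Gr}$ constructed in \cite{MS}. I would use $\mathbf{s}$ to build a candidate mutation sequence $\mathbf{s}'$ on $Q_\Le$ by projecting it through the delete/merge operations: each mutation in $\mathbf{s}$ performed at a vertex deleted on the way to $Q_\Le$ is simply omitted, and each mutation at a vertex that is identified with another under merging is replaced by a mutation at the corresponding merged vertex of $Q_\Le$. The hope is that running this shorter sequence $\mathbf{s}'$ on the framed quiver of $Q_\Le$ exactly mirrors the evolution of the framed $Q_{Gr}$ under $\mathbf{s}$, with the deletion/merge applied throughout.

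The main obstacle is verifying that $\mathbf{s}'$ really is a green-to-red sequence for $Q_\Le$. This amounts to tracking the framed (ice) quivers of $Q_{Gr}$ and $Q_\Le$ in parallel under $\mathbf{s}$ and $\mathbf{s}'$ respectively, and showing that the delete/merge operation on the unframed quivers extends compatibly to both the extra frozen vertices and the c-vectors. The delicate case is the merge: when two vertices of $Q_{Gr}$ collapse to a single vertex of $Q_\Le$, I must argue that the projected mutations at that vertex faithfully reproduce the combined local effect of the corresponding mutations in $\mathbf{s}$, and that the colors (equivalently, signs of c-vectors) arising in $Q_\Le$ match the colors in $Q_{Gr}$ restricted to the surviving vertices. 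I expect this compatibility to require a careful combinatorial analysis of how the \cite{MS} sequence visits the specific vertices and subquivers targeted by the deletion and merging steps of the \Le-diagram construction, with the outcome that once every c-vector of $Q_{Gr}$ has turned red, the same must hold for every c-vector of $Q_\Le$.
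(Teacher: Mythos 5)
Your reduction to the \Le-diagram case via mutation invariance, and your identification of the Marsh--Scott sequence on the rectangular Grassmannian quiver as the starting point, both match the paper. The gap is in how you handle the merge. You propose to build a sequence $\mathbf{s}'$ on $Q_{\Le}$ by literally projecting the Marsh--Scott sequence $\mathbf{s}$ through the delete/merge operations and then to verify, by tracking framed quivers and $c$-vectors in parallel, that $\mathbf{s}'$ is still reddening. But merging vertices is not among the operations known to preserve the existence of a green-to-red sequence (only deletion and mutation are, by Muller's Theorems 3.1.3 and 3.2.2), and there is no reason the proposed projection should track: once two vertices $u,v$ of $Q_{Gr}$ are collapsed to $w$, the local quiver at $w$ differs from the local quivers at $u$ and at $v$, so the very first mutation of $\mathbf{s}'$ at $w$ already evolves the framed quiver differently from what $\mathbf{s}$ does at $u$ or $v$. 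You name this verification as ``the main obstacle'' and defer it; that deferred step is precisely the content of the theorem, and as formulated it is unlikely to go through.

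The paper avoids this entirely. Its key result (Theorem 4.1) shows that each merge arising in the \Le-diagram construction is \emph{not} a genuinely new operation: merging $v_{i,j}$ with the vertex $v'$ can be realized by an explicit sequence of mutations along the southeast-pointing ``chain of squares'' starting at $v_{i,j}$, followed by the deletion of a single vertex at the end of the chain. Consequently the passage from $Q''$ to $Q$ uses only deletions and mutations, and the corollary follows by quoting Muller's two invariance theorems --- no bookkeeping of $c$-vectors or colors is ever needed. If you want to salvage your approach, the missing idea is exactly this: prove that the specific merges produced by placing a $0$ in a \Le-diagram are mutations in disguise (plus one deletion), rather than trying to push the reddening sequence through an arbitrary merge.
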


In particular, this implies that the Enough Global Monomials property holds in this case.  We also obtain another class of examples of cluster algebras, that agree with their upper cluster algebras, as follows from \cite{M2} and \cite{MuSp}, and accept a green-to-red sequence.  Combining these results with those in \cite{GHKK} we can derive the following conclusion.  If Conjecture \ref{*} holds then the coordinate ring of an open positroid variety has a canonical basis of theta functions, parametrized by the lattice of $g$-vectors.  This in turn raises a number of interesting questions regarding the explicit computation of such basis and its structure constants.

In Section 2, we review the combinatorics of \Le-diagrams and the associated quivers. In Section 3, we provide an explicit construction of quivers arising from \Le-diagrams.  We use this description to prove the existence of a green-to-red sequence for such quivers in Section~4.

\medskip
\noindent{\bf Acknowledgments:} The authors would like to thank Lauren Williams for bringing this project to their attention, Greg Muller for several fruitful conversations, and Steven Karp for valuable comments on the exposition. 
\section{Preliminaries}

\subsection{Quiver mutation and green-to-red sequences}

In this section we review the basic facts and definitions about quiver mutations that we will use in our construction.

A \emph{quiver} is a finite directed graph with no loops ($\xymatrix{\bullet \ar@(ru,rd)}$ \text{    }\text{        }\text{  }) and no directed 2-cycles ($\xymatrix{\bullet \ar@/^/[r] &\bullet \ar@/^/[l]}$). Next, consider the following definition introduced in \cite{FZ}.

\begin{defn}
Given a quiver $Q$ and a vertex $k$, we define a new quiver $\mu_k(Q)$, which we refer to as the result of \emph{mutating} $Q$ at $k$. We form $\mu_k(Q)$ by performing the following three operations in order:

\begin{enumerate}[(1)]
\item For each pair of arrows passing through $k$, of the form $i\to k\to j$, add a new arrow $i\to j$.
\item Reverse the direction of every arrow incident to $k$.
\item Remove all directed 2-cycles.
\end{enumerate}

Mutating at a fixed vertex is an involution. Accordingly, we say that two quivers are \emph{mutation-equivalent} if there is a sequence of mutations taking one to the other.

In some contexts, we will want to disallow mutation at some of the vertices of a quiver. When we do this, we say those vertices are \emph{frozen}. A frozen vertex remains frozen after any sequence of mutations.
\end{defn}

Note that the result of mutating a quiver does not depend on any edges between two frozen vertices. Because of this, it is customary to ignore such edges.

The central result of this paper is the existence of a certain sequence of mutations called a green-to-red sequence, also known as a \emph{reddening sequence}. We present the definition here for completeness, but as we will see, the details will not be necessary for the proof of the main theorem. This exposition closely follows \cite{M}, which the reader should consult for more details.

\begin{defn}
A \emph{$g$-seed} is a quiver $Q$ with $n$ vertices together with, for each vertex of $Q$, an element of the lattice $\mathbb{Z}^n$, with the property that the chosen vectors form a basis. We refer to these as the \emph{$g$-vectors} of the corresponding vertices.

Given an ordering of the vertices of $Q$, there is an obvious $g$-seed formed by assigning the $i$'th vertex the vector $e_i$, which has an entry 1 in the $i$'th coordinate and 0's elsewhere. We call this the \emph{initial $g$-seed}.

Given a $g$-seed and an element $v\in\mathbb{Z}^n$, we can write \[v=\sum_{k\in Q}c_k(v)g_k,\] where $g_k$ is the $g$-vector of vertex $k$. In this notation, we say that a nonfrozen vertex $k$ is \emph{green} if, for each $v\in\mathbb{N}^n$, $c_k(v)\ge 0$, and we say it is \emph{red} if each such $c_k(v)\le 0$. Observe, that every nonfrozen vertex in the initial seed is green.

Like an ordinary quiver, a $g$-seed can be \emph{mutated} to produce a new $g$-seed. The underlying quiver mutates in the same way as above. If we are mutating at vertex $k$, all $g$-vectors other than $g_k$ remain unchanged, and we define
\[g_k'=
\begin{cases}
-g_k+\sum_{\mathrm{arrows}\ j\to k}g_j & k\mbox{ is green} \\
-g_k+\sum_{\mathrm{arrows}\ j\gets k}g_j & k\mbox{ is red}
\end{cases}
\]
\end{defn}

Remarkably, if we start with the initial seed and perform an arbitrary sequence of mutations, each nonfrozen vertex of the resulting $g$-seed is either green or red, which is necessary for this mutation procedure to be well-defined. (This is the \emph{Sign Coherence Theorem}, proved in \cite{DWZ}.) In fact, mutation turns green vertices red and vice-versa, and it is straightforward to check that this means it remains an involution.

\begin{defn}
Given a quiver $Q$, a \emph{green-to-red sequence} is a sequence of mutations of $Q$ starting at the initial seed and ending at a seed in which all non-frozen vertices are red. If each mutation in the sequence occurs at a green vertex, we call it a \emph{maximal green sequence}. 
\end{defn}

Note that the existence of a green-to-red sequence or a maximal green sequence does not depend on the frozen vertices of the quiver. That is, it suffices to construct one after removing all frozen vertices and any incident edges.

Maximal green sequences first appeared in an application of this machinery to physics \cite{K}, but they were soon seen to exist for large classes of quivers coming from cluster algebras that were known to be well-behaved, see for instance \cite{A, BDP, BHIT, GY, Mi}. It was conjectured that various desirable properties of cluster algebras are equivalent to the existence of a maximal green sequence.

It turns out, though, that the weaker notion of green-to-red sequence suffices for certain applications, and in fact they turn out to be better behaved. In particular, we will take advantages of the following two facts, proved in \cite{M}.

\begin{thm}[\cite{M}, 3.1.3]\label{gtrdelete}
If a quiver $Q$ admits a green-to-red sequence, then so does any quiver obtained from $Q$ by deleting a set of vertices and their adjacent edges.
\end{thm}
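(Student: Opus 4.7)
The plan is to reduce the claim to the single-vertex case and then to the easier claim that \emph{freezing} a vertex preserves the existence of a green-to-red sequence. Indeed, the paper has already observed that the existence of a green-to-red sequence is insensitive to frozen vertices and the edges incident to them; so once we know freezing $v$ preserves existence, we may freely discard $v$ along with all its adjacent edges, producing the desired sequence on $Q\setminus\{v\}$. Iterating this one vertex at a time handles the deletion of an arbitrary subset.

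So the central task is: given a green-to-red sequence $\mathbf{s}=\mu_{i_k}\circ\cdots\circ\mu_{i_1}$ on $Q$ and a chosen vertex $v$, produce a green-to-red sequence for the $g$-seed on $Q$ in which $v$ is declared frozen. The natural first attempt is to take the subsequence $\mathbf{s}'$ obtained by simply deleting every occurrence of $\mu_v$. One would then need to verify that (i) along $\mathbf{s}'$ every mutation occurs at a vertex that is still either green or red in the $v$-frozen sense, and (ii) the final $g$-seed has every nonfrozen vertex red. I would check this by induction along the length of $\mathbf{s}$, keeping track of the two parallel computations: the $g$-vectors in $Q$ under $\mathbf{s}$ versus the $g$-vectors in $Q_v$ (the $v$-frozen quiver) under $\mathbf{s}'$. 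At each step one compares the two $g$-seeds and argues that the green/red status of any vertex $w\neq v$ remains consistent, using that for $w\neq v$ the mutation formula depends only on arrows incident to $w$ and on the $g$-vectors of its in- or out-neighbors.

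The main obstacle is precisely this comparison: naively, skipping mutations at $v$ changes the intermediate seeds and therefore changes what counts as green or red, so the subsequence need not literally be a green-to-red sequence for $Q_v$. The cleanest way to organize the bookkeeping, and the step I expect to absorb most of the work, is via tropical duality and the language of scattering diagrams: a green-to-red sequence corresponds to a path in the cluster scattering diagram of $Q$ from the positive chamber to the negative chamber, and freezing $v$ corresponds to intersecting the diagram with the hyperplane dual to $v$. I would argue that such a path restricts to a path in the smaller scattering diagram that still terminates in the negative chamber, because reddening walls for vertices other than $v$ restrict correctly under this projection. An alternative, more combinatorial path to the same conclusion is through the formalism of rooted cluster morphisms of Assem--Dupont--Schiffler, under which a freezing morphism transports reddening sequences.

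Once either approach produces a green-to-red sequence on $Q_v$, we invoke the observation just recorded in the paper that frozen vertices and their edges can be erased without affecting the existence of such a sequence, completing the single-vertex case. Induction on the size of the deleted set then yields the theorem in full generality.
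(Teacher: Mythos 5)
A preliminary remark: the paper does not prove this statement at all --- it is imported as a black box from Muller \cite{M}, Theorem 3.1.3 --- so there is no in-paper argument to compare against; the proof in \cite{M} is a self-contained combinatorial argument at the level of framed quivers and $c$-vectors, and in particular does not route through scattering diagrams. Your outer reductions are correct and standard: it suffices to delete one vertex at a time, and since the existence of a green-to-red sequence is insensitive to frozen vertices and their incident edges, deleting $v$ is equivalent to freezing $v$. The difficulty is that everything after this reduction \emph{is} the theorem, and neither of your two routes establishes it. Your primary strategy --- take the subsequence $\mathbf{s}'$ of $\mathbf{s}$ omitting all mutations at $v$ and verify green/red statuses ``by induction along the length of $\mathbf{s}$'' --- is not merely hard to organize; it is false. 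For $Q = 1 \to 2$, the sequence $(\mu_1,\mu_2,\mu_1)$ is a maximal green (hence green-to-red) sequence, but if $v=2$ the subsequence is $(\mu_1,\mu_1)$, which returns the quiver with $2$ frozen to its initial all-green seed. So no bookkeeping between the two parallel computations can succeed; a correct proof must produce a genuinely different mutation sequence for the frozen quiver, or argue nonconstructively.

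Your fallback absorbs the entire content of the theorem into two asserted-but-unproved claims. The statement that ``freezing $v$ corresponds to intersecting the scattering diagram with the hyperplane dual to $v$'' and that a finite path from $\mathcal{C}^+$ to $\mathcal{C}^-$ ``restricts'' to one for the smaller diagram is exactly the nontrivial point: a generic path witnessing the reddening sequence for $Q$ does not lie in that hyperplane, so it cannot literally be restricted, and walls of a cluster scattering diagram do not interact simply with a linear projection (infinitely many walls of the ambient diagram may accumulate near any neighborhood of the slice). Making this precise is a substantial piece of work in its own right, not a routine verification one can wave at. Likewise, the appeal to rooted cluster morphisms cites a formalism in which no transport theorem for reddening sequences is available. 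In summary: the reduction to the single-vertex freezing statement is right and matches how the result is usually set up, but the core step is not carried out, and the one concrete construction you propose is refuted by a two-vertex example.
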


\begin{thm}[\cite{M}, 3.2.2]\label{gtrmutate}
If a quiver $Q$ admits a green-to-red sequence, then so does any quiver mutation-equivalent to $Q$.
\end{thm}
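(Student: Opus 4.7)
The plan is to work with the framed-quiver reformulation of a green-to-red sequence. For a quiver $Q$ on vertex set $V$, let $\hat{Q}$ denote the framed quiver obtained by adding a frozen vertex $i'$ and arrow $i\to i'$ for each $i\in V$, and let $\check{Q}$ denote the coframed quiver with arrows $i'\to i$ instead. Sign-coherence of $c$-vectors implies that a mutation sequence $\mathbf{i}=(i_1,\ldots,i_r)$ at non-frozen vertices is a green-to-red sequence for $Q$ if and only if $\mu_{\mathbf{i}}(\hat Q)$ agrees with $\check Q$ after some permutation $\sigma$ of the non-frozen vertices. This reformulation reduces the theorem to a statement purely about framed quivers.

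By induction on the length of the mutation sequence connecting $Q$ to a mutation-equivalent quiver, it suffices to prove the result when the equivalence is a single step: if $Q$ admits a green-to-red sequence $\mathbf{i}$, then so does $Q':=\mu_k(Q)$ for any non-frozen vertex $k$. The natural candidate is the sandwich sequence $\mathbf{i}':=(k,\,i_1,\ldots,i_r,\,\sigma(k))$, where the initial mutation at $k$ is meant to convert the initial seed of $\hat{Q'}$ into a seed closely related to the initial seed of $\hat{Q}$, after which the middle portion $\mathbf{i}$ carries this to a seed closely related to $\check Q$, and the final mutation at $\sigma(k)$ converts this into $\check{Q'}$ up to the appropriate permutation of non-frozen vertices.

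The main obstacle, and where the real content lies, is that $\mu_k(\hat{Q'})$ is \emph{not literally} $\hat Q$: mutation of a framed quiver at $k$ introduces spurious arrows $j\to k'$ whenever $j\to k$, and reverses the frozen arrow between $k$ and $k'$. One must verify that these extraneous arrows propagate controllably through the middle sequence and are cleared by the terminal mutation at $\sigma(k)$. I would handle this by passing from quivers to $C$-matrices: the $C$-matrix of the initial seed of $Q'$ is the identity, while the $C$-matrix of the corresponding labeled seed in the cluster pattern of $Q$ (obtained by mutating its initial seed at $k$) differs from the identity by an explicit unimodular matrix $E_k$ dictated by the rule for $c$-vector mutation. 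Right-multiplying each successive $C$-matrix in the $\mathbf{i}$-computation by $E_k$ transfers the middle computation from the $Q$-pattern to the $Q'$-pattern, and the final mutation at $\sigma(k)$ contributes the matching correction on the other side, producing a terminal $C$-matrix of the form $-P$ for a permutation matrix $P$ — precisely the defining condition for a green-to-red sequence.

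I anticipate the hardest step to be verifying that sign-coherence interacts correctly with this change of basis, so that every intermediate $c$-vector in the $\mathbf{i}'$-computation for $Q'$ has a well-defined sign (allowing the $g$-seed mutation rule to proceed without ambiguity). One could avoid this bookkeeping by appealing to a more global characterization — e.g., identifying the existence of a green-to-red sequence with surjectivity of the cluster scattering map of \cite{GHKK}, which depends only on the cluster algebra and not on the chosen seed — but a direct proof via framed quivers is preferable for its concreteness and independence from the scattering-diagram machinery.
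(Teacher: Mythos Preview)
The paper does not contain a proof of this theorem. It is stated with attribution to \cite{M}, Theorem~3.2.2, and invoked as a black box in the proof of Corollary~\ref{maincor}; there is no argument in the present paper to compare your proposal against.

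For what it is worth, your sketch is essentially the proof given in the cited reference. Muller argues exactly by the conjugation idea: if $\mathbf{i}$ is green-to-red for $Q$ with terminal permutation $\sigma$, and $Q'=\mu_k(Q)$, then $(k,\mathbf{i},\sigma(k))$ is green-to-red for $Q'$. The discrepancy you flag between $\mu_k(\widehat{Q'})$ and $\widehat{Q}$ is handled in \cite{M} via the tropical duality between $c$- and $g$-vectors (Nakanishi--Zelevinsky) rather than by tracking an explicit correction matrix $E_k$, but the content is the same change of basis you describe. One correction: your stated ``hardest step'' is not actually an obstacle. Sign-coherence, as proved in \cite{DWZ}, applies to \emph{every} mutation sequence starting from the initial seed, so the intermediate $c$-vectors in the $(k,\mathbf{i},\sigma(k))$-computation for $Q'$ are automatically sign-coherent; no separate verification is needed. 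The only thing that requires work is showing the terminal $C$-matrix is a negated permutation matrix, and that is precisely what the conjugation-plus-duality argument delivers.
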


While the first of these facts is true of maximal green sequences, the second is not. (A counterexample can be found in \cite{M}.) We will, in fact, not construct a green-to-red sequence directly for the class of quivers we are interested in.  Instead, we will use Theorems \ref{gtrdelete} and \ref{gtrmutate} to deduce its existence from the fact that for a smaller class of quivers such a sequence was previously constructed in \cite{MS}. (In fact, they produce a maximal green sequence.) While it is possible to use the arguments here to explicitly produce the resulting sequence of mutations, we do not perform that computation in the present paper.

\subsection{\Le-diagrams, $\Gamma$-graphs, and plabic graphs}
Following the exposition in \cite{P}, we describe bijections between the various combinatorial objects that are involved in our construction.  First, we show how to pass from a \Le-diagram to the associated $\Gamma$-graph, and afterwards to the corresponding plabic graph.  Finally, we construct a quiver that can be thought of as a dual graph of a given plabic graph.  

\begin{defn}
Given a partition $\lambda$, a \emph{\Le-diagram} of shape $\lambda$ is a filling of the boxes of the Young diagram of $\lambda$ with 0's and 1's such that, whenever a box in the $i$-th row and $j$-th column contains a 0, either all boxes in $i$-th row to the left of it or all boxes in the $j$'th column above it are filled with 0's.  
\end{defn}

Figure~\ref{fig:Le} (left) shows an example of a \Le-diagram.  Here dots in the boxes of the Young diagram indicate that they are filled with 1's and empty boxes are filled with 0's.  If we draw two lines going to the right and down from the dotted box, the \Le-property means that every box located at the intersection of two lines contains a dot.

\begin{figure}[htb]
  \centering
  \includegraphics[scale=.3]{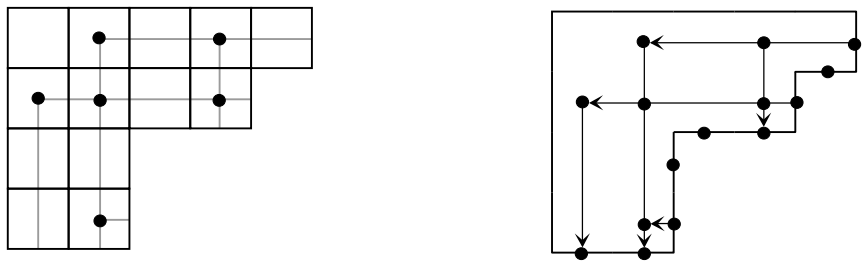}
  \caption{A \Le-diagram of shape $\lambda = (5,4,2,2)$ (left) and the corresponding $\Gamma$-graph (right). }
   \label{fig:Le}
\end{figure}

\begin{defn}
A \emph{$\Gamma$-graph} is a planar directed graph $\mathcal{G}$ satisfying the following conditions: 
\begin{enumerate}[(1)]
\item The graph $\mathcal{G}$ is drawn inside a closed boundary curve in $\mathbb{R}^2$, with some \emph{internal} vertices inside the curve and some \emph{boundary} vertices along the edge. 
\item $\mathcal{G}$ contains only vertical edges oriented downward and horizontal edges oriented to the left.
\item For any internal vertex $x$, the graph $\mathcal{G}$ contains the line going down from $x$ until it hits the boundary (at some boundary sink vertex) and the line going to the right from $x$ until it hits the boundary (at some boundary source vertex). 
\item Each pairwise intersection of such lines is also a vertex of $\mathcal{G}$. 
\item The graph may also contain some number of isolated boundary vertices, which are assigned to be sinks or sources. 
\end{enumerate}
\end{defn}

Given a \Le-diagram $D$ we can construct the corresponding $\Gamma$-graph $\mathcal{G}_D$ in the following way.  Suppose $D$ has shape $\lambda$, where $\lambda$ fits inside an $n\times k$ rectangle.  The boundary of the Young diagram of $\lambda$ gives a lattice path of length $n+k$ from the upper right corner to the lower left corner of the rectangle.  Place a vertex in the middle of each step in this lattice path.  The vertices corresponding to the vertical steps in the lattice path will be sources and the vertices corresponding to horizontal steps will be sinks.  Then connect the upper right corner with the lower left corner of the rectangle by another path so that, together with the lattice path, it forms a closed curve containing the Young diagram in its interior.  Place an internal vertex in each box of $D$ containing a 1.  Finally, draw the line that goes downward from each such internal vertex vertex until it hits a boundary sink and another line that goes to the right from this vertex until it hits a boundary source.  For an example, see Figure~\ref{fig:Le}.

\begin{defn}
A \emph{plabic graph} is a finite planar graph drawn inside a disk where each internal vertex is colored either black or white and each boundary vertex is incident to a single edge.  We consider these graphs up to homotopy. 
\end{defn}

There is a set of transformations studied in \cite{P} that can be performed on a plabic graph resulting in a new plabic graph while preserving some of its properties.  In particular,  we are allowed to remove/insert interior vertices of any color of degree 2.  That is, given a vertex of degree 2 as follows $\xymatrix@C=10pt{\ar@{-}[r]&*=+{\circ}\ar@{-}[r]&}$ we can remove it from the graph and glue the adjacent edges to obtain $\xymatrix@C=10pt{\ar@{-}[rr]&&}$.  Conversely,  given an edge we can insert an interior vertex in the middle.  In this paper we will be working with equivalence classes of plabic graphs related in this way.

Given a $\Gamma$-graph $\mathcal{G}$ we can construct the corresponding plabic graph by applying the transformation in Figure~\ref{fig:transformation} to each internal vertex of $\mathcal{G}$. The boundary of $\mathcal{G}$ is mapped to the boundary of the plabic graph.  

\begin{remark}\label{internalvertex}
Let $x$ be an internal vertex in a $\Gamma$-graph $\mathcal{G}$.   By definition there exists a vertical arrow starting at $x$ and ending at some boundary sink, and a horizontal arrow ending at $x$ and starting at some boundary source.  Therefore, the result of transformation described in Figure~\ref{fig:transformation} in a local neighborhood of $x$ depends only on the existence of two internal vertices in $\mathcal{G}$, one vertex located above $x$ and the other vertex located to the left of $x$.
\end{remark}

\begin{figure}

\[
\hspace*{3cm}\xymatrix@R=10pt@C=15pt{&&&&&&&&&\ar[dd]&&&&&\ar[dd]&&&&\\
*=0{\bullet}\ar[d]&\ar[l]&&&&*=0{\bullet} \ar[d] & \ar[ll] &&& *=0{\bullet}&\ar[l] &&&& *=0{\bullet} & \ar[ll]\\
&&&&&&&&&&&&&&&&&\\
\ar@{.>}[d]&&&&&\ar@{.>}[d]&&&&\ar@{.>}[d]&&&&&\ar@{.>}[d]\\
&&&&&&&&&&&&&&&\\
&&&&&&&&&\ar@{-}[dd]&&&&& \ar@{-}[d]\\
*=0{\bullet}\ar@{-}[d]&\ar@{-}[l]&&&\ar@{-}[r]&*=+{\circ}\ar@{-}[r]\ar@{-}[d] &&&&*=0{\bullet}&\ar@{-}[l]&&&&*=0{\bullet}&\ar@{-}[l]\\
&&&&&&&&&&&&&*=+{\circ}\ar@{-}[ur]\ar@{-}[d]\ar@{-}[l]&&\\
&&&&&&&&&&&&&&&}
\]

\caption{Transformation of a $\Gamma$-graph to the corresponding plabic graph.}
\label{fig:transformation} 
\end{figure}
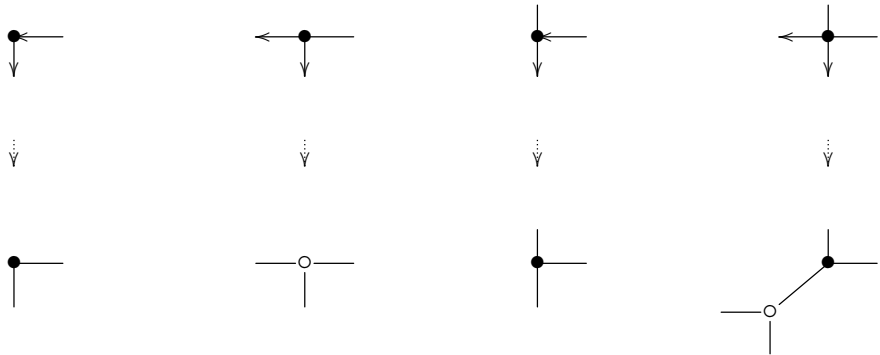

Given a plabic graph $G$ we construct the corresponding quiver $Q$.  The vertices of $Q$ correspond to faces of $G$, and for every edge $e_{ij}$ between a black and a white vertex, separating distinct faces labeled $F_i, F_j$, there exists an arrow $v_i \to v_j$ in $Q$ oriented so that the black vertex is to the left of the arrow.  The vertices of $Q$ that come from a face adjacent to the boundary of the plabic graph are frozen, so we remove any arrows between such vertices.  We also remove a maximal collection of oriented 2-cycles. 

If $\lambda$ is a partition of zero we say that the corresponding plabic graph is a disk with no vertices, so the corresponding quiver consists of a single frozen vertex and no arrows.

\begin{figure}[htb]
  \centering
  \includegraphics[scale=.5]{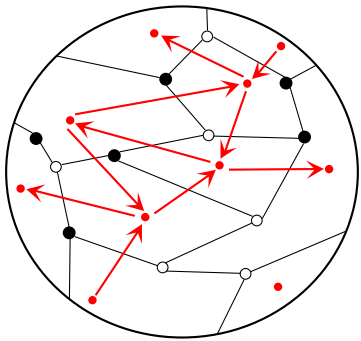}
  \caption{Plabic graph (black) and the corresponding quiver (red).}
   \label{fig:plabic}
\end{figure}

\begin{remark}
The quiver associated to a plabic graph is not affected by applying homotopy relations or removing/inserting vertices of degree 2.  Therefore, the above construction is well-defined.   
\end{remark}

Plabic graphs obtained from \Le-diagrams are said to be reduced, in the sense of \cite{P}.  Reduced plabic graphs are considered up to certain move equivalences, and precisely one such graph in its equivalence class comes from a \Le-diagram.  Moreover, whenever two such graphs are equivalent then their corresponding quivers are related by a sequence of mutations.  Hence, to prove the main theorem it suffices to construct green-to-red sequences for quivers arising from \Le-diagrams.  Note that equivalence classes of reduced plabic graphs are actually in bijection with \Le-diagrams, see \cite{P} for more details.     

\section{Quivers arising from \Le-diagrams}

Quivers arising from \Le-diagrams associated to rectangular Young diagrams such that every box is filled with 1 have a well-understood, regular structure.   They have been studied in \cite{S}.  In this section, we describe quivers that arise from arbitrary \Le-diagrams.  

A \Le-diagram $D$ of a rectangular shape $n\times k$ filled with $1$'s corresponds to a quiver $Q_D$ consisting of $nk+1$ vertices, where $(n-1)(k-1)$ vertices are nonfrozen and $n+k$ vertices are frozen.   If $n=0$ or $k=0$ then we say that $Q_D$ consists of a single frozen vertex.  Label the vertices of $Q_D$ in the following way.  To each box $b_{i,j}$ of $D$ we associate a vertex $v_{i,j}$, such that the southeast corner of $b_{i,j}$ belongs to a face $F_{i,j}$ in the corresponding plabic graph $G_D$. In this way a vertex $v_{i,j}$ is nonfrozen if and only if $1\leq i \leq k-1$ and $1\leq j \leq n-1$.  Moreover, $Q_D$ contains one additional frozen vertex, which we label $v_0$.  We can think of $v_0$ as the face $F_0$ of $G_D$ containing the northwest corner of the box $b_{1,1}$.  See Figure~\ref{fig:grass} for an example of such labeling.  Here the gray lines outline the Young diagram, and the outer boundary of the Young diagram corresponds to the outer boundary of $G_D$.  The frozen vertices are colored blue.

\begin{figure}[htb]
\hspace*{8cm}{\xymatrix@C=15pt@R=15pt{{\color{blue} v_0} &&&&&& \\
v_{1,1}\ar[u] \ar[dr]&v_{1,2} \ar[l]\ar[dr]&v_{1,3}\ar[l]\ar[dr]&v_{1,4}\ar[l]\ar[dr]&{\color{blue} v_{1,5}}\ar[l]\\
  v_{2,1}\ar[u]\ar[dr] & v_{2,2}\ar[l]\ar[u]\ar[dr] & v_{2,3}\ar[l]\ar[u]\ar[dr] & v_{2,4}\ar[l]\ar[u]\ar[dr] & {\color{blue} v_{2,5}}\ar[l]\\
   v_{3,1}\ar[u]\ar[dr] & v_{3,2}\ar[l]\ar[u]\ar[dr] & v_{3,3}\ar[l]\ar[u]\ar[dr] & v_{3,4}\ar[l]\ar[u]\ar[dr] & {\color{blue} v_{3,5}}\ar[l]\\
   {\color{blue} v_{4,1}}\ar[u] & {\color{blue} v_{4,2}} \ar[u] & {\color{blue} v_{4,3}}\ar[u] & {\color{blue} v_{4,4}}\ar[u] & {\color{blue} v_{4,5}} }}


  \vspace*{-4cm}\hspace*{-10cm}{\includegraphics[scale=.7]{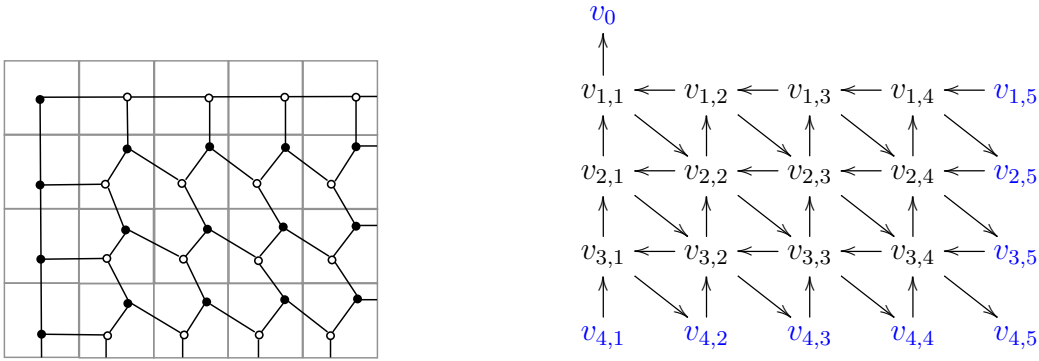}}
  \caption{Plabic graph (left) corresponding to a rectangular $4\times5$ \Le-diagram filled only with 1's and the corresponding quiver (right).}
   \label{fig:grass}
\end{figure}

\begin{lem}\label{lemma1}
Let $\mu, \lambda$ be two partitions with $\mu\subset \lambda$ and $\lambda$ having rectangular shape.  Let $D_{\lambda}$ and $D_{\mu}$ be the corresponding \Le-diagrams where every box is filled with 1.  Let $G_{\lambda}, G_{\mu}$ be the associated plabic graphs, and let $Q_{\lambda}, Q_{\mu}$ be the associated quivers.
\medskip

\begin{enumerate}[(a)]

\item $G_{\mu}$ is obtained from $G_{\lambda}$ by removing the faces of $G_\lambda$ corresponding to boxes in $\lambda\setminus \mu$.  

\item $Q_{\mu}$ is obtained from $Q_{\lambda}$ by removing the vertices in $Q_\lambda$ corresponding to boxes in $\lambda\setminus\mu$ and the arrows between the resulting frozen vertices of $Q_{\mu}$.
\end{enumerate}
\end{lem}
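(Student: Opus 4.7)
My plan is to prove part (a) using the local nature of the transformation from the $\Gamma$-graph to the plabic graph, and then to deduce part (b) by tracing faces and bicolored edges through the construction of the quiver from the plabic graph. The crucial tool throughout will be Remark~\ref{internalvertex}: the local transformation at an internal vertex of a $\Gamma$-graph depends only on whether there are internal vertices directly above and directly to its left.

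For part (a), observe that because every box is filled with a $1$, the internal vertices of $\mathcal{G}_\mu$ and $\mathcal{G}_\lambda$ are indexed exactly by the boxes of $\mu$ and $\lambda$ respectively. For any box $(i,j) \in \mu$, the partition property $\mu_{i-1} \ge \mu_i$ ensures that $(i-1,j) \in \mu$ whenever $i > 1$, and $(i,j-1) \in \mu$ whenever $j > 1$; the analogous statement for $\lambda$ is immediate because $\lambda$ is rectangular. Hence the two bits of data that govern the local transformation of Figure~\ref{fig:transformation} at each box of $\mu$ agree whether we view that box as a vertex of $\mathcal{G}_\mu$ or as a vertex of $\mathcal{G}_\lambda$. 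It follows that the local picture of $G_\mu$ around each such internal vertex is identical to the corresponding local picture in $G_\lambda$, and $G_\mu$ is obtained from $G_\lambda$ by removing the (boundary-adjacent, staircase-shaped) subgraph contained in the faces corresponding to $\lambda \setminus \mu$.

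For part (b), I will use (a) together with the definition of $Q_D$: the vertices correspond to faces of $G_D$, the arrows correspond to edges between differently colored internal vertices (oriented with the black vertex on the left), and finally one removes all arrows between frozen vertices, that is, between those corresponding to faces adjacent to the boundary. By (a), the faces of $G_\mu$ correspond precisely to the faces of $G_\lambda$ other than those indexed by $\lambda \setminus \mu$, and the bicolored edges of $G_\mu$ are exactly the bicolored edges of $G_\lambda$ incident to these remaining faces. So before the final step of the quiver construction, $Q_\mu$ is just $Q_\lambda$ with the vertices for $\lambda \setminus \mu$ and all their incident arrows deleted. The only subtlety is that some vertices which were nonfrozen in $Q_\lambda$ become frozen in $Q_\mu$, because their faces now border the new staircase boundary; removing the arrows between such newly frozen vertices yields exactly $Q_\mu$.

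I expect the main obstacle to be a careful check of the behavior at the inner corners of the staircase boundary of $\mu$, where one must verify that the removal of the $\lambda \setminus \mu$ pieces from $G_\lambda$ does not introduce spurious merges of remaining vertices or edges. This reduces to the finite case analysis packaged in Figure~\ref{fig:transformation} together with Remark~\ref{internalvertex}, so while it requires care, no genuinely new input is needed beyond the definitions already recalled.
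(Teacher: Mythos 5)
Your proposal is correct and rests on the same key tool as the paper's own proof: Remark~\ref{internalvertex} (locality of the $\Gamma$-graph-to-plabic-graph transformation) for part (a), and for part (b) the observation that deletion only destroys arrows between vertices that become frozen. The only difference is presentational --- the paper removes the boxes of $\lambda\setminus\mu$ one at a time and verifies three explicit local pictures (Figure~\ref{fig:lemma1}), whereas you treat all boxes at once and defer the same boundary/inner-corner check that those pictures carry out.
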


\begin{proof}

(a).  Observe that we can obtain $D_{\mu}$ from $D_{\lambda}$ by removing one box at a time such that at each step we have a \Le-diagram. Therefore, it suffices to show that whenever we have two partitions $u'\subset u$ differing by a single box $b_{i,j}$, then the associated plabic graph $G_{u'}$ can be obtained from $G_{u}$ by removing the corresponding face $F_{i,j}$.  Note that, here we assume that every box in the associated \Le-diagrams is filled with 1.   

If $u$ consists of a single box, then $G_u$ has two faces $F_0$ and $F_{1,1}$.  Removing $F_{1,1}$ yields a plabic graph with one face and no internal vertices.  By our convention this graph corresponds to the partition of zero, so the statement holds.  If $u$ consists of more than one box, we consider three distinct cases based on the location of the face $F_{i,j}$.  In case 1, the corresponding box $b_{i,j}$ is neither in the first row nor in the first column of $D_{u}$. In case 2, it is at the bottom of the first column of $D_{u}$, and in case 3 it is on the right side of the first row of $D_u$.  The diagrams in the first column in Figure~\ref{fig:lemma1} illustrates the three resulting local configurations of the face $F_{i,j}$ in the graph $G_{u}$.  The black lines form a part of $G_{u}$, while the grey lines outline a part of the box $b_{i,j}$.   In the second column we show plabic graphs obtained from $D_u$ after removing the face $F_{i,j}$.  The graphs in the last column are homotopic to the ones in the second column, and we claim that these are precisely the plabic graphs associated to $G_{u'}$.  This will prove part (a) because plabic graphs are considered up to homotopy.  

Let $\mathcal{G}_u$, $\mathcal{G}_{u'}$ be the $\Gamma$-graphs associated to $D_u$, $D_{u'}$ as above.  Hence, $\mathcal{G}_u$ and $\mathcal{G}_{u'}$ differ by a single internal vertex $x_{i,j}$.   Moreover, by construction there are no other internal vertices in $\mathcal{G}_u$ below or to the right of $x_{i,j}$.  By Remark~\ref{internalvertex}, the presence of $x_{i,j}$ in $\mathcal{G}_u$ does not affect the transformation of other vertices in $\mathcal{G}_{u}$ or edges that are not attached to $x_{i,j}$.  Therefore, the plabic graphs $G_{u}$ and $G_{u'}$ are the same apart from the internal vertices arising from $x_{i,j}$ and their adjacent edges going to the right and down.  This shows that the plabic graphs in the third column in Figure~\ref{fig:lemma1} coincide with $G_{u'}$ and proves the claim.  

\begin{figure}[htb]
  \centering

 \begingroup%
  \def\svgwidth{400pt}
  \makeatletter%
  \providecommand\color[2][]{%
    \errmessage{(Inkscape) Color is used for the text in Inkscape, but the package 'color.sty' is not loaded}%
    \renewcommand\color[2][]{}%
  }%
  \providecommand\transparent[1]{%
    \errmessage{(Inkscape) Transparency is used (non-zero) for the text in Inkscape, but the package 'transparent.sty' is not loaded}%
    \renewcommand\transparent[1]{}%
  }%
  \providecommand\rotatebox[2]{#2}%
  \ifx\svgwidth\undefined%
    \setlength{\unitlength}{639.584008bp}%
    \ifx\svgscale\undefined%
      \relax%
    \else%
      \setlength{\unitlength}{\unitlength * \real{\svgscale}}%
    \fi%
  \else%
    \setlength{\unitlength}{\svgwidth}%
  \fi%
  \global\let\svgwidth\undefined%
  \global\let\svgscale\undefined%
  \makeatother%
  \begin{picture}(1,0.54641875)%
    \put(0,0){\includegraphics[width=\unitlength]{lemma1.pdf}}%
    \put(0.27883324,0.4361914){\color[rgb]{0,0,0}\makebox(0,0)[lt]{\begin{minipage}{0.15635163\unitlength}\raggedright $\scriptstyle{F_{i,j}}$\end{minipage}}}%
    \put(0.27875692,0.22702478){\color[rgb]{0,0,0}\makebox(0,0)[lt]{\begin{minipage}{0.15635163\unitlength}\raggedright $\scriptstyle{F_{i,1}}$\end{minipage}}}%
    \put(0.27760178,0.03482725){\color[rgb]{0,0,0}\makebox(0,0)[lt]{\begin{minipage}{0.15635163\unitlength}\raggedright $\scriptstyle{F_{1,j}}$\end{minipage}}}%
    \put(0.19606359,0.42994357){\color[rgb]{0,0,0}\makebox(0,0)[lt]{\begin{minipage}{0.15635163\unitlength}\raggedright $\scriptstyle{F_{i,j\text{-}1}}$\end{minipage}}}%
    \put(0.47444702,0.43020525){\color[rgb]{0,0,0}\makebox(0,0)[lt]{\begin{minipage}{0.15635163\unitlength}\raggedright $\scriptstyle{F_{i,j\text{-}1}}$\end{minipage}}}%
    \put(0.71741722,0.42772313){\color[rgb]{0,0,0}\makebox(0,0)[lt]{\begin{minipage}{0.15635163\unitlength}\raggedright $\scriptstyle{F_{i,j\text{-}1}}$\end{minipage}}}%
    \put(0.22900559,0.26932294){\color[rgb]{0,0,0}\makebox(0,0)[lt]{\begin{minipage}{0.15635163\unitlength}\raggedright $\scriptstyle{F_0}$\end{minipage}}}%
    \put(0.50418445,0.27200326){\color[rgb]{0,0,0}\makebox(0,0)[lt]{\begin{minipage}{0.15635163\unitlength}\raggedright $\scriptstyle{F_0}$\end{minipage}}}%
    \put(0.78829769,0.31846203){\color[rgb]{0,0,0}\makebox(0,0)[lt]{\begin{minipage}{0.15635163\unitlength}\raggedright $\scriptstyle{F_0}$\end{minipage}}}%
    \put(0.21729566,0.03040496){\color[rgb]{0,0,0}\makebox(0,0)[lt]{\begin{minipage}{0.15635163\unitlength}\raggedright $\scriptstyle{F_{1,j\text{-}1}}$\end{minipage}}}%
    \put(0.49754171,0.03792252){\color[rgb]{0,0,0}\makebox(0,0)[lt]{\begin{minipage}{0.15635163\unitlength}\raggedright $\scriptstyle{F_{1,j\text{-}1}}$\end{minipage}}}%
    \put(0.8009854,0.0427149){\color[rgb]{0,0,0}\makebox(0,0)[lt]{\begin{minipage}{0.15635163\unitlength}\raggedright $\scriptstyle{F_{1,j\text{-}1}}$\end{minipage}}}%
    \put(0.52384007,0.08795506){\color[rgb]{0,0,0}\makebox(0,0)[lt]{\begin{minipage}{0.15635163\unitlength}\raggedright $\scriptstyle{F_0}$\end{minipage}}}%
    \put(0.25176171,0.0843813){\color[rgb]{0,0,0}\makebox(0,0)[lt]{\begin{minipage}{0.15635163\unitlength}\raggedright $\scriptstyle{F_0}$\end{minipage}}}%
    \put(0.81998822,0.09063536){\color[rgb]{0,0,0}\makebox(0,0)[lt]{\begin{minipage}{0.15635163\unitlength}\raggedright $\scriptstyle{F_0}$\end{minipage}}}%
    \put(0.55108971,0.2884552){\color[rgb]{0,0,0}\makebox(0,0)[lt]{\begin{minipage}{0.15635163\unitlength}\raggedright $\scriptstyle{F_{i\text{-}1,1}}$\end{minipage}}}%
    \put(0.83326279,0.31846203){\color[rgb]{0,0,0}\makebox(0,0)[lt]{\begin{minipage}{0.15635163\unitlength}\raggedright $\scriptstyle{F_{i\text{-}1,1}}$\end{minipage}}}%
    \put(0.27107358,0.28414131){\color[rgb]{0,0,0}\makebox(0,0)[lt]{\begin{minipage}{0.19299347\unitlength}\raggedright $\scriptstyle{F_{i\text{-}1,1}}$\end{minipage}}}%
    \put(0.21578883,0.48596849){\color[rgb]{0,0,0}\makebox(0,0)[lt]{\begin{minipage}{0.15635163\unitlength}\raggedright $\scriptstyle{F_{i\text{-}1,j\text{-}1}}$\end{minipage}}}%
    \put(0.49659,0.4871236){\color[rgb]{0,0,0}\makebox(0,0)[lt]{\begin{minipage}{0.15635163\unitlength}\raggedright $\scriptstyle{F_{i\text{-}1,j\text{-}1}}$\end{minipage}}}%
    \put(0.71762064,0.52806845){\color[rgb]{0,0,0}\makebox(0,0)[lt]{\begin{minipage}{0.15635163\unitlength}\raggedright $\scriptstyle{F_{i\text{-}1,j\text{-}1}}$\end{minipage}}}%
    \put(0.28564887,0.53605679){\color[rgb]{0,0,0}\makebox(0,0)[lt]{\begin{minipage}{0.15635163\unitlength}\raggedright $\scriptstyle{F_{i\text{-}1,j}}$\end{minipage}}}%
    \put(0.56829519,0.53505495){\color[rgb]{0,0,0}\makebox(0,0)[lt]{\begin{minipage}{0.15635163\unitlength}\raggedright $\scriptstyle{F_{i\text{-}1,j}}$\end{minipage}}}%
    \put(0.84872631,0.54878941){\color[rgb]{0,0,0}\makebox(0,0)[lt]{\begin{minipage}{0.15635163\unitlength}\raggedright $\scriptstyle{F_{i\text{-}1,j}}$\end{minipage}}}%
    \put(0.00448658,0.45607698){\color[rgb]{0,0,0}\makebox(0,0)[lt]{\begin{minipage}{0.15127991\unitlength}\raggedright Case 1\\ $\scriptstyle{i\not=1,\;j\not=1}$\end{minipage}}}%
    \put(-0.00056747,0.24842933){\color[rgb]{0,0,0}\makebox(0,0)[lt]{\begin{minipage}{0.15311169\unitlength}\raggedright Case 2\\ $\scriptstyle{i\not=1,\;j=1}$\end{minipage}}}%
    \put(0.00322307,0.0526898){\color[rgb]{0,0,0}\makebox(0,0)[lt]{\begin{minipage}{0.1479701\unitlength}\raggedright Case 3\\ $\scriptstyle{i=1,\;j\not=1}$\end{minipage}}}%
  \end{picture}%
\endgroup%

  \caption{Plabic graphs obtained as a result of removing the face $F_{i,j}$}
   \label{fig:lemma1}
\end{figure}

(b). By the same argument as above, it suffices to show that the statement holds whenever we have two partitions $u, u'$ as in part (a).  By part (a) we know that the plabic graphs $G_{u'}$ and $G_{u'}$ differ by a face $F_{i,j}$, so by construction the quiver $Q_{u'}$ is obtained from $Q_{u}$ by removing the vertex $v_{i,j}$.  Indeed, in cases 2 and 3 in Figure~\ref{fig:lemma1} we see that $v_{i,j}$ is a frozen vertex of $Q_u$ with no incoming or outgoing arrows.  Moreover, with the exception of $v_{i,j}$, the set of frozen vertices of $Q_{u'}$ coincides with that of $Q_{u}$.  This shows that the statement holds in cases 2 and 3.  In case 1, in addition to removing $v_{i,j}$, the vertex $v_{i-1,j-1}$ which was mutable in $Q_u$ becomes a frozen vertex in $Q_{u'}$.  Also, there are arrows $v_ {i,j-1}\to v_{i-1,j-1}$ and $v_{i-1,j}\to v_{i-1,j-1}$ in $Q_{u}$ which are no longer present in $Q_{u'}$.   Hence, in this case we also delete arrows between the new frozen vertices of $Q_{u'}$.  This shows part (b).  

\end{proof}


Next we consider how the quiver changes if we remove all edges between two adjacent faces of the corresponding plabic graph.

\begin{defn}
Given a set of vertices $S$ in a quiver $Q$, we say $Q'$ is obtained from $Q$ by \emph{merging} $S$ if it is the result of replacing all vertices of $S$ by a single vertex $v_{S}$ and, for every arrow starting (respectively ending) at some $v\in S$, drawing a new arrow starting (respectively ending) at $v_{S}$.  The vertex $v_S$ is frozen if there exists at least one frozen vertex in $S$.  Finally, we remove a maximal collection of oriented 2-cycles, loops, and arrows between the new frozen vertices created in this way.    
\end{defn}

\begin{lem}\label{lemma0}
Consider two plabic graphs $G, G'$ and their associated quivers $Q, Q'$.  Let $F_i, F_j$ be two distinct faces in $G$ that share an edge.  If $G'$ is obtained from $G$ by removing all edges between $F_i$ and $F_j$, then $Q'$ is obtained from $Q$ by merging the corresponding vertices $v_i$ and $v_j$.    
\end{lem}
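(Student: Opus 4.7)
The plan is to verify the statement directly from the definition of the quiver associated to a plabic graph, by separately matching vertex sets, arrow sets, and the cleanup step that removes 2-cycles, loops, and arrows between frozen vertices.

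For vertices, note that the faces of $G'$ are the faces of $G$ with $F_i$ and $F_j$ amalgamated into a single face $F_{\{i,j\}}$, which is exactly the vertex set obtained from $Q$ by identifying $v_i$ with $v_j$. The new face touches the boundary of the disk if and only if one of $F_i$ or $F_j$ did, so $v_{\{i,j\}}$ is frozen precisely when the merging rule says it should be. For arrows, recall that every arrow of $Q$ comes from a black-white edge of $G$ separating two distinct faces, oriented with the black vertex on the left. The black-white edges of $G'$ are precisely those black-white edges of $G$ that are not shared between $F_i$ and $F_j$. Each such surviving edge contributes in $Q'$ the same arrow it contributed in $Q$, except that an endpoint $v_i$ or $v_j$ is replaced by $v_{\{i,j\}}$; this is exactly the effect of the merging operation. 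The deleted black-white edges between $F_i$ and $F_j$ contributed arrows directly between $v_i$ and $v_j$, and under merging these become loops, which matches their disappearance from $G'$.

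It remains to check that the cleanup step agrees. By construction, $Q'$ has no loops, no 2-cycles, and no arrows between frozen vertices, so I must verify that these are exactly what the merging operation removes after the vertex identification. A pair of oppositely oriented edges in $G$ connecting some face $F_k$ to each of $F_i$ and $F_j$ will, after identification, produce one arrow $v_k \to v_{\{i,j\}}$ and one arrow $v_{\{i,j\}} \to v_k$, i.e.\ a 2-cycle; in $G'$ these edges simply both connect $F_k$ to $F_{\{i,j\}}$ and their contributions to the quiver cancel under the same 2-cycle reduction. The analogous bookkeeping, together with the rule for frozen vertices, handles the remaining cases. Finally, any degenerate configurations left in $G'$ after the edge deletions (for instance, internal vertices that have become degree two) do not affect the quiver, by the remark that the quiver is invariant under homotopy and insertion or removal of degree-2 vertices.

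The main obstacle is purely combinatorial bookkeeping, namely confirming that multiplicities and orientations match locally around each edge that was removed and around each face adjacent to both $F_i$ and $F_j$. I expect no substantive difficulty here: the argument is a direct unpacking of the dual-graph construction and the homotopy-invariance of the quiver.
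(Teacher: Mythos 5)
Your proposal is correct and follows essentially the same route as the paper's proof: a direct verification from the dual-graph construction that the vertex sets, the arrows (with endpoints redirected to the amalgamated face), and the cleanup of 2-cycles and frozen-vertex arrows all match the merging operation. Your treatment is, if anything, slightly more explicit than the paper's about loops from the deleted edges and about degree-2 vertices being absorbed by the homotopy-invariance remark.
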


\begin{proof}
Let $F_{ij}$ be the face in $G'$ obtained by combining $F_i$ and $F_j$ after removing edges between them.  By construction the vertices of $Q'$ are obtained from the vertices of $Q$ by removing $v_i, v_j$ and adding a new vertex $v_{ij}$.  If $F_i$ or $F_j$ is adjacent to the boundary, then $F_{ij}$ must also be adjacent to the boundary.  Hence, if $v_i$ or $v_j$ is frozen then so is $v_{ij}$.  Every edge in $G$ between $F_i$ or $F_j$ and some face $F_x$ where $x\not= i , j$, remains as an edge between $F_{ij}$ and $F_x$ in $G'$.   Conversely, every edge between $F_{ij}$ and $F_x$ in $G'$ arises from an edge in $G$ either between $F_{i}$ and $F_{x}$ or between $F_j$ and $F_x$.  This implies that the arrows in $Q$ starting or ending at $v_i$ or $v_j$ correspond to arrows starting or ending at $v_{ij}$ in $Q'$ up to removing 2-cycles.  Therefore, $Q'$ is obtained from $Q$ by merging $v_i$ and $v_j$.    
\end{proof}

Let $D_{\lambda}$ be some \Le-diagram filled with 0's and 1's corresponding to a partition $\lambda$.  As in Lemma \ref{lemma1}, to each box $b_{i,j}$ in $D_{\lambda}$ we associate a face $F_{i,j}$ in the corresponding plabic graph $G_{\lambda}$ and a vertex $v_{i,j}$ in the corresponding quiver $Q_{\lambda}$.  Let $b_{i,j}$ be a box in $D_{\lambda}$ filled with 0.  Then by definition exactly one of the following statements holds.

\begin{description}

\item [Case 1] all boxes in row $j$ and to the left of $b_{i,j}$ are filled with 0's but there exists a box in column $i$ above $b_{i,j}$ filled with 1.

\item [Case 2] all boxes in $D_{\lambda}$ in column $i$ and above $b_{i,j}$ are filled with 0's but there exists a box in row $j$ to the left of $b_{i,j}$ filled with 1.  

\item [Case 3] all boxes in column $i$ and above $b_{i,j}$ and all boxed in row $j$ and to the left of $b_{i,j}$ are filled with 0's. 
\end{description}

In the first case, let $b_{i'', j}$ be the box filled with 1 such that $1\leq i''<i$ and $i''$ is maximal and define $\mathcal{S}_{i,j}= \{F_{i',j}\mid i''\leq i \leq i\}$ to be the associated subset of faces of $G_{\lambda}$.  Similarly, in the second case let $b_{i, j''}$ be the box filled with $1$ such that $1\leq j''<j$ and $j''$ is maximal and define $\mathcal{S}_{i,j}=\{F_{i, j'}\mid j''\leq j' \leq j\}$. Finally, if all boxes to the left and above $b_{i,j}$ are filled with 0's then define $\mathcal{S}_{i,j}=\{F_{i',j'}\mid i'\leq i \text{ and } j'=j \text{ or } i'=i\text{ and } j'\leq j \}\cup\{F_0\}$.  Analogously, for every box $b_{i,j}$ filled with 0 we associate a subset of vertices $S_{i,j} = \{v_{i,j}\mid F_{i,j}\in \mathcal{S}_{i,j}\}$ in the quiver $Q_{\lambda}$. 

\begin{exmp}\label{S}
If $D_{\lambda}$ is given in Figure~\ref{fig:Le} then we have the following sets.  
$$\xymatrix@R=0pt{S_{1,1}=\{v_0, v_{1,1} \} && S_{2,3}= \{v_{2,2}, v_{2,3}\} && S_{4,1}=\{v_{2,1}, v_{3,1}, v_{4,1}\}\\
S_{1,3}=\{v_{1,2}, v_{1,3}\} && S_{3,1}=\{v_{2,1}, v_{3,1}\} && \\
S_{1,5}=\{ v_{1,4}, v_{1,5}\} && S_{3,2} = \{v_{2,2},v_{3,2}\} }$$  
\end{exmp}

\begin{lem}\label{lemma2}
Let $D_{\lambda}$ be an arbitrary \Le-diagram of shape $\lambda$, and let $D_\lambda'$ be the \Le-diagram of shape $\lambda$ filled entirely with 1's.   Let $G_{\lambda}, G_{\lambda}'$ denote the associated plabic graphs, and $Q_{\lambda}, Q_{\lambda}'$ denote the associated quivers.
\medskip

\begin{enumerate}[(a)]

\item $G_{\lambda}$ is obtained from $G_{\lambda}'$ by removing all edges between the faces in $\mathcal{S}_{i,j}$ for every box $b_{i,j}$ of $D_{\lambda}$ filled with 0.  

\item $Q_{\lambda}$ is obtained from $Q_{\lambda}'$ by merging the vertices in $S_{i,j}$ for every box $b_{i,j}$ of $D_{\lambda}$ filled with 0.    
\end{enumerate}

\end{lem}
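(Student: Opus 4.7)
The plan is to prove part (a) by induction on the number of 0-boxes in $D_\lambda$, and then deduce part (b) immediately from part (a) and Lemma \ref{lemma0} (removing all edges between adjacent faces in a plabic graph corresponds precisely to merging the associated quiver vertices). As a preliminary structural observation I would first show that the internal vertices of $\mathcal{G}_\lambda$ are exactly the 1-boxes of $D_\lambda$: condition (4) in the definition of a $\Gamma$-graph forces every intersection of a vertical and a horizontal line to be an internal vertex, but the \Le-property rules out any such intersection being a 0-box, since otherwise that box would have both a 1 above in its column and a 1 to its left in its row. Consequently $\mathcal{G}_\lambda$ differs from $\mathcal{G}_\lambda'$ exactly in that the latter carries additional internal vertices at every 0-box of $D_\lambda$.

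The base case (no 0-boxes, $D_\lambda=D_\lambda'$) is trivial. For the inductive step I would pick a 0-box $b_{i,j}$ maximizing $i+j$; this guarantees that no other 0-box lies strictly below $b_{i,j}$ in column $j$ or strictly to the right in row $i$. Flipping $b_{i,j}$ from $0$ to $1$ produces a new \Le-diagram $D_\lambda^+$ with one fewer 0-box (flipping $0\to 1$ never violates \Le). The maximality of $i+j$ has the crucial consequence that $b_{i,j}$ never appears as the nearest 1 above or to the left of any other 0-box, so the sets $\mathcal{S}_{i',j'}$ and their case classifications remain unchanged for every other 0-box. By the inductive hypothesis applied to $D_\lambda^+$, the graph $G_\lambda^+$ is obtained from $G_\lambda'$ by the prescribed edge removals for every 0-box of $D_\lambda$ \emph{except} $b_{i,j}$.

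The main obstacle is the final local step: showing that $G_\lambda$ is obtained from $G_\lambda^+$ by removing exactly the edges between faces in $\mathcal{S}_{i,j}$. Since $\mathcal{G}_\lambda$ and $\mathcal{G}_\lambda^+$ differ only by the presence of the internal vertex at $(i,j)$, Remark \ref{internalvertex} localizes the question to that vertex, and the local plabic structure there depends only on whether internal vertices exist above in column $j$ and to the left in row $i$. These two yes/no conditions correspond precisely to the three cases in the definition of $\mathcal{S}_{i,j}$, with the fourth combination (both above and left) excluded by \Le. A direct inspection of Figure \ref{fig:transformation} in each case then identifies which faces the local structure at $(i,j)$ in $G_\lambda^+$ separates: in Case 1 it separates $F_{i,j}$ from the column strip $F_{i'',j},\ldots,F_{i-1,j}$, which has already been merged into a single face by the previous inductive edge removals at the 0-boxes $(i''{+}1,j),\ldots,(i{-}1,j)$; in Case 2 it separates $F_{i,j}$ from an analogously merged row strip; in Case 3 it separates $F_{i,j}$ from the combined outer region containing $F_0$ together with the L-shape of already-merged faces above and to the left. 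Deleting the vertex at $(i,j)$ therefore merges $F_{i,j}$ into exactly $\mathcal{S}_{i,j}$, completing the induction and hence the proof of (a); part (b) then follows by applying Lemma \ref{lemma0} to each group of removed edges.
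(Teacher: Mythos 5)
Your proposal is correct in outline and follows essentially the same route as the paper: induction on the number of $0$'s, a processing order ensuring that the box being flipped has only $1$'s to its southeast, localization of the flip via Remark~\ref{internalvertex}, a three-way case split matching the definition of $\mathcal{S}_{i,j}$, and part (b) deduced from part (a) together with Lemma~\ref{lemma0}. (The paper introduces the $0$'s in reading order starting from the all-$1$'s filling rather than peeling off the $0$-box maximizing $i+j$; the two orderings serve the same purpose.)

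Two points need repair, though neither is fatal. First, the parenthetical ``flipping $0\to 1$ never violates \Le'' is false in general: a $0$-box whose \Le-condition is witnessed by an all-$0$ segment to its left (or above it) can become illegal when one of those $0$'s is turned into a $1$. The statement you need is true for your chosen box, but only because the maximality of $i+j$ guarantees that $b_{i,j}$ lies neither above nor to the left of any other $0$-box --- the same observation you already use to keep the sets $\mathcal{S}_{i',j'}$ stable --- so you should invoke that rather than a false general principle. Second, Remark~\ref{internalvertex} does not localize the comparison of $G_\lambda$ with $G_\lambda^{+}$ to the single vertex at $(i,j)$: inserting or deleting that internal vertex also changes whether the vertices immediately to its right and immediately below it have an internal vertex ``to the left'' or ``above,'' so their local transformations under Figure~\ref{fig:transformation} can change as well. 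The paper's case analysis (Figure~\ref{fig:lemma2}) explicitly verifies that the two plabic graphs agree on the neighborhoods of those adjacent boxes, and your ``direct inspection'' must cover them too; as written you assert the outcome of that inspection rather than performing it, and that finite pictorial check is where essentially all of the content of the paper's proof lives.
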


\begin{proof}
(a).   Observe that we can obtain $D_{\lambda}$ from $D_{\lambda}'$ by a sequence of steps replacing a single box filled with 1 by a box filled with 0 such that at each step we have a \Le-diagram.  For instance, starting with the first row of $\lambda$ we can move from left to right across the row and replace one box at a time when necessary.  Next, we can repeat the same procedure with the second row, then the third row, and continue in this way until we obtain $D_{\lambda}$.  In this manner we introduce 0's in the \Le-diagram such that at each step the resulting diagram still satisfies the \Le-property.  Let $D_{\mu}$ be a \Le-diagram appearing at some point in this process, and let $D_{\mu}'$ be the next diagram in the sequence.  It suffices to show that the statement is true for the corresponding plabic graphs $G_{\mu}$ and $G_{\mu}'$.   

We proceed by induction on the number of 0's. If $G_{\mu}'$ contains a single 0 in a box $b_{i,j}$, then it must appear in the first row or in the first column of $\mu$.  Here $G_{\mu}=G_{\lambda}$.  We illustrate the three cases in Figure~\ref{fig:basecase}.  We note that the boxes located to the right or below $b_{i,j}$ might not be present in $\mu$, so by including them in the figure we depict the most general situation.  In the first column we draw the local configuration of $G_{\mu}$, and in the second column we show the plabic graphs obtained by removing the edges between faces in $\mathcal{S}_{i,j}$ of $G_{\mu}'$.  We label the resulting face $\mathcal{F}_{i,j}$.  The graphs in the third column are obtained by applying the homotopy relations and removing/adding vertices of degree two.  We can see that in this way we produce the plabic graph corresponding to $G_{\mu}'$.  Note that if some of the boxes to the right or below $b_{i,j}$ are not present in $\mu$ then, by truncating the diagrams in Figure~\ref{fig:basecase} at the appropriate places, we can still observe the same transformation from $G_{\mu}$ to $G_{\mu'}$ and obtain the same conclusion.  This shows the base case.  

\begin{figure}[htb]
  \centering

\begingroup%
  \makeatletter%
  \def\svgwidth{450pt}
  \providecommand\color[2][]{%
    \errmessage{(Inkscape) Color is used for the text in Inkscape, but the package 'color.sty' is not loaded}%
    \renewcommand\color[2][]{}%
  }%
  \providecommand\transparent[1]{%
    \errmessage{(Inkscape) Transparency is used (non-zero) for the text in Inkscape, but the package 'transparent.sty' is not loaded}%
    \renewcommand\transparent[1]{}%
  }%
  \providecommand\rotatebox[2]{#2}%
  \ifx\svgwidth\undefined%
    \setlength{\unitlength}{990.17847124bp}%
    \ifx\svgscale\undefined%
      \relax%
    \else%
      \setlength{\unitlength}{\unitlength * \real{\svgscale}}%
    \fi%
  \else%
    \setlength{\unitlength}{\svgwidth}%
  \fi%
  \global\let\svgwidth\undefined%
  \global\let\svgscale\undefined%
  \makeatother%
  \begin{picture}(1,0.45262927)%
    \put(0,0){\includegraphics[width=\unitlength]{basecase.pdf}}%
    \put(0.30677121,0.09471848){\color[rgb]{0,0,0}\makebox(0,0)[lt]{\begin{minipage}{0.05794578\unitlength}\raggedright $\scriptstyle{F_0}$\end{minipage}}}%
    \put(0.30717929,0.06656172){\color[rgb]{0,0,0}\makebox(0,0)[lt]{\begin{minipage}{0.05794578\unitlength}\raggedright $\scriptstyle{F_{1,1}}$\end{minipage}}}%
    \put(0.00544988,0.06447718){\color[rgb]{0,0,0}\makebox(0,0)[lt]{\begin{minipage}{0.10446564\unitlength}\raggedright $\scriptstyle{i=1,\,j=1}$\end{minipage}}}%
    \put(0.00130935,0.22384473){\color[rgb]{0,0,0}\makebox(0,0)[lt]{\begin{minipage}{0.10446564\unitlength}\raggedright $\scriptstyle{i=1,\,j\not=1}$\end{minipage}}}%
    \put(0.00132756,0.19458369){\color[rgb]{0,0,0}\makebox(0,0)[lt]{\begin{minipage}{0.11426512\unitlength}\raggedright $\scriptstyle{\mathcal{S}_{1,j}=\{F_{1,j-1},\, F_{1,j}\}}$\end{minipage}}}%
    \put(0.00047103,0.40687271){\color[rgb]{0,0,0}\makebox(0,0)[lt]{\begin{minipage}{0.10446564\unitlength}\raggedright $\scriptstyle{i\not=1,\,j=1}$\end{minipage}}}%
    \put(-0.0000036,0.37592043){\color[rgb]{0,0,0}\makebox(0,0)[lt]{\begin{minipage}{0.11426512\unitlength}\raggedright $\scriptstyle{\mathcal{S}_{i,1}=\{F_{i-1,1},\, F_{i,1}\}}$\end{minipage}}}%
    \put(0.00519027,0.03889605){\color[rgb]{0,0,0}\makebox(0,0)[lt]{\begin{minipage}{0.11426512\unitlength}\raggedright $\scriptstyle{\mathcal{S}_{1,1}=\{F_{0},\, F_{1,1}\}}$\end{minipage}}}%
    \put(0.58093343,0.07431503){\color[rgb]{0,0,0}\makebox(0,0)[lt]{\begin{minipage}{0.05794578\unitlength}\raggedright $\scriptstyle{\mathcal{F}_{i,j}}$\end{minipage}}}%
    \put(0.28515008,0.40407165){\color[rgb]{0,0,0}\makebox(0,0)[lt]{\begin{minipage}{0.05794578\unitlength}\raggedright $\scriptstyle{F_0}$\end{minipage}}}%
    \put(0.31208263,0.37142615){\color[rgb]{0,0,0}\makebox(0,0)[lt]{\begin{minipage}{0.06569909\unitlength}\raggedright $\scriptstyle{F_{i,1}}$\end{minipage}}}%
    \put(0.31248418,0.41735269){\color[rgb]{0,0,0}\makebox(0,0)[lt]{\begin{minipage}{0.05794578\unitlength}\raggedright $\scriptstyle{F_{i\text{-}1,1}}$\end{minipage}}}%
    \put(0.56134613,0.4055187){\color[rgb]{0,0,0}\makebox(0,0)[lt]{\begin{minipage}{0.05794578\unitlength}\raggedright $\scriptstyle{F_0}$\end{minipage}}}%
    \put(0.59113516,0.38470719){\color[rgb]{0,0,0}\makebox(0,0)[lt]{\begin{minipage}{0.05794578\unitlength}\raggedright $\scriptstyle{\mathcal{F}_{i,j}}$\end{minipage}}}%
    \put(0.84007915,0.40129798){\color[rgb]{0,0,0}\makebox(0,0)[lt]{\begin{minipage}{0.05794578\unitlength}\raggedright $\scriptstyle{F_0}$\end{minipage}}}%
    \put(0.87150045,0.38334295){\color[rgb]{0,0,0}\makebox(0,0)[lt]{\begin{minipage}{0.05794578\unitlength}\raggedright $\scriptstyle{\mathcal{F}_{i,j}}$\end{minipage}}}%
    \put(0.85068893,0.08029604){\color[rgb]{0,0,0}\makebox(0,0)[lt]{\begin{minipage}{0.05794578\unitlength}\raggedright $\scriptstyle{\mathcal{F}_{i,j}}$\end{minipage}}}%
    \put(0.30800195,0.23497361){\color[rgb]{0,0,0}\makebox(0,0)[lt]{\begin{minipage}{0.14200797\unitlength}\raggedright $\scriptstyle{F_0}$\end{minipage}}}%
    \put(0.29569213,0.20205365){\color[rgb]{0,0,0}\makebox(0,0)[lt]{\begin{minipage}{0.04637331\unitlength}\raggedright $\scriptstyle{F_{1,j\text{-}1}}$\end{minipage}}}%
    \put(0.34651517,0.21221969){\color[rgb]{0,0,0}\makebox(0,0)[lt]{\begin{minipage}{0.14200797\unitlength}\raggedright $\scriptstyle{F_{1,j}}$\end{minipage}}}%
    \put(0.58930364,0.23564864){\color[rgb]{0,0,0}\makebox(0,0)[lt]{\begin{minipage}{0.14200797\unitlength}\raggedright $\scriptstyle{F_0}$\end{minipage}}}%
    \put(0.60937947,0.20871609){\color[rgb]{0,0,0}\makebox(0,0)[lt]{\begin{minipage}{0.14200797\unitlength}\raggedright $\scriptstyle{\mathcal{F}_{1,j}}$\end{minipage}}}%
    \put(0.85742688,0.23550861){\color[rgb]{0,0,0}\makebox(0,0)[lt]{\begin{minipage}{0.14200797\unitlength}\raggedright $\scriptstyle{F_0}$\end{minipage}}}%
    \put(0.89089479,0.21020834){\color[rgb]{0,0,0}\makebox(0,0)[lt]{\begin{minipage}{0.14200797\unitlength}\raggedright $\scriptstyle{\mathcal{F}_{i,j}}$\end{minipage}}}%
  \end{picture}%
\endgroup%

  \caption{Plabic graphs obtained by changing the filling of a box $b_{i,j}$ from 1 to 0 in the \Le-diagram while the rest of the boxes are filled with 1's.}
   \label{fig:basecase}
\end{figure}

Suppose that the statement holds whenever $G_{\mu}$ contains $m\geq 1$ boxes filled with 0's.  We want to show that the lemma is also true for $G_{\mu}'$ which is obtained from $G_{\mu}$ by adding one more 0 in a box $b_{i,j}$.   Recall that by construction of $G_{\mu}'$ as described in the first paragraph there are no 0's in the boxes below and to the right of $b_{i,j}$.  This allows us to better depict the local configuration of the corresponding face $F_{i,j}$ in $G_{\mu}$ (see Figure~\ref{fig:lemma2}).  By induction the graph $G_{\mu}$ is obtained from $G_{\lambda}$ by removing edges between the appropriate faces.   Given a face $F_{x}$ in $G_{\lambda}$ let $\mathcal{F}_{x}$ denote the face in $G_{\mu}$ resulting from removing edges between $F_{x}$ and other faces as described in the induction hypothesis.  Observe that in $G_{\mu}$ we have $F_{i',j'}=\mathcal{F}_{i',j'}$ for all $i'\geq i$, $j'\geq j$.   However, we remark that combining the faces in $\mathcal{S}_x$ does not necessarily result in $\mathcal{F}_x$, because in general $\mathcal{S}_x$ forms only a subset of faces that make up $\mathcal{F}_x$.

In the first column in Figure~\ref{fig:lemma2} we show various possibilities for the local configuration of $F_{i,j}$ in $G_{\mu}$.  The dotted and dashed lines drawn inside the same rectangle represent two different local configurations that might occur in a graph.  The solid black lines depict edges that must be present in every such plabic graph regardless of choosing dotted or dashed edges to complete the graph.  We show these possibilities simultaneously on the same diagram to avoid considering too many cases that can be handled by the exact same argument.   Also, note that some of the boxes to the right and below $b_{i,j}$ might not be present in $D_{\mu}$, but here we depict the most general situation.  In the second column we draw graphs obtained from $G_{\mu}$ after eliminating edge(s) between the designated faces.  We label the resulting face $\mathcal{F'}_{i,j}$.  The graphs in the third column are obtained from those in the second column by removing/adding vertices of degree 2 and applying homotopy relations.  The grey vertex in the bottom right diagram represents a vertex that could be either white or black depending on the choice of the dotted or dashed edges.  We claim that the graphs in the third column are precisely $G_{\mu}'$.  We discuss the different cases in more detail below.

In case 1 we assume that all boxes to the left of $b_{i,j}$ are filled with 0's but there is some box above $b_{i,j}$ that is filled with 1.  In particular, we see that $i\not=1$.  We want to show that $G_{\mu}'$ is obtained from $G_{\mu}$ by removing the edge between $F_{i,j}$ and $\mathcal{F}_{i-1,j}$, because by definition we have $\mathcal{S}_{i,j} = \mathcal{S}_{i-1,j}\cup \{F_{i,j}\}$ in $G_{\mu}'$.   Observe that the associated $\Gamma$-graphs $\mathcal{G}_{\mu}, \mathcal{G}_{\mu}'$ differ by a single internal vertex $x_{i,j}$ and its adjacent edges.  Recall that by assumption there exists an internal vertex in $\mathcal{G}_{\mu}$ above $x_{i,j}$.   Hence, by Remark~\ref{internalvertex}, except for $x_{i,j}$ and the vertex $x_{i, j+1}$ located directly to the right of $x_{i,j}$, the local transformation given in Figure~\ref{fig:transformation} of the remaining internal vertices is the same for both $\mathcal{G}_{\mu}$ and $\mathcal{G}_{\mu}'$.   Therefore, it suffices to check that $G_{\mu}'$ coincides with the plabic graph obtained in the third column in Figure~\ref{fig:lemma2} in the neighborhood of boxes $b_{i,j}$ and $b_{i, j+1}$.  However, this is precisely what the figure shows.  Note that if some of the boxes to the right or below $b_{i,j}$ are not present in $\mu$ then, by truncating the diagrams in Figure~\ref{fig:lemma2} at the appropriate places, we can still observe the same transformation from $G_{\mu}$ to $G_{\mu'}$ and obtain the same conclusion.

In case 2 we assume that all boxes above $b_{i,j}$ are filled with 0's but there is some box to the left of $b_{i,j}$ that is filled with 1.  In particular, we see that $j\not=1$.  We want to show that $G_{\mu}'$ is obtained from $G_{\mu}$ by removing the edge between $F_{i,j}$ and $\mathcal{F}_{i,j-1}$, because by definition we have $\mathcal{S}_{i,j} = \mathcal{S}_{i,j-1}\cup \{F_{i,j}\}$ in $G_{\mu}'$.  By the same argument as in case 1 it suffices to check that the two plabic graphs coincide in the neighborhood of boxes $b_{i,j}$ and $b_{i+1,j}$.  However, we can see this is true from Figure~\ref{fig:lemma2}.

In case 3 we assume that all boxes above and to the left of $b_{i,j}$ are filled with 0's.   First observe that in this case we have $\mathcal{F}_{i,j-1}=\mathcal{F}_{i-1,j}=\mathcal{F}_{i-1,j-1}$, because by definition  $F_{i-1,j-1}$ belongs to both $\mathcal{S}_{i-1,j}$ and $\mathcal{S}_{i,j-1}$.   Also, in $G_{\mu}'$ the faces $F_{i-1,j}, F_{i,j-1}\in \mathcal{S}_{i,j}$, and these are the only faces in $\mathcal{S}_{i,j}$ such that the corresponding $\mathcal{F}_{i-1,j}, \mathcal{F}_{i,j-1}$ share an edge with $F_{i,j}$.  Hence, in accordance with the lemma $\mathcal{F}'_{i,j}$ should be obtained by removing edges between $F_{i,j}$ and $\mathcal{F}_{i,j-1}$.   Using the same argument as in the two cases discussed above, it suffices to check that $G_{\mu}'$ and the bottom right graph in Figure~\ref{fig:lemma2} coincide in the neighborhood of boxes $b_{i,j}, b_{i+1, j}, b_{i,j+1}$.  However, we can again see this is true from Figure~\ref{fig:lemma2}.  

By induction this completes the proof.  We remark that the order of removing edges between the faces in a given $\mathcal{S}_{i,j}$ does not make a difference. 

\begin{figure}[htb]
  \centering

\begingroup%
  \makeatletter%
  \def\svgwidth{400pt}
  \providecommand\color[2][]{%
    \errmessage{(Inkscape) Color is used for the text in Inkscape, but the package 'color.sty' is not loaded}%
    \renewcommand\color[2][]{}%
  }%
  \providecommand\transparent[1]{%
    \errmessage{(Inkscape) Transparency is used (non-zero) for the text in Inkscape, but the package 'transparent.sty' is not loaded}%
    \renewcommand\transparent[1]{}%
  }%
  \providecommand\rotatebox[2]{#2}%
  \ifx\svgwidth\undefined%
    \setlength{\unitlength}{1016.1137581bp}%
    \ifx\svgscale\undefined%
      \relax%
    \else%
      \setlength{\unitlength}{\unitlength * \real{\svgscale}}%
    \fi%
  \else%
    \setlength{\unitlength}{\svgwidth}%
  \fi%
  \global\let\svgwidth\undefined%
  \global\let\svgscale\undefined%
  \makeatother%
  \begin{picture}(1,0.52304675)%
    \put(0,0){\includegraphics[width=\unitlength]{lemma2.pdf}}%
    \put(0.22512541,0.30943968){\color[rgb]{0,0,0}\makebox(0,0)[lt]{\begin{minipage}{0.1130461\unitlength}\raggedright $\scriptstyle{F_{i,j}}$\end{minipage}}}%
    \put(0.14988158,0.30879691){\color[rgb]{0,0,0}\makebox(0,0)[lt]{\begin{minipage}{0.1130461\unitlength}\raggedright $\scriptstyle{\mathcal{F}_{i,j-1}}$\end{minipage}}}%
    \put(0.1977573,0.35573956){\color[rgb]{0,0,0}\makebox(0,0)[lt]{\begin{minipage}{0.1130461\unitlength}\raggedright $\scriptstyle{\mathcal{F}_{i-1,j}}$\end{minipage}}}%
    \put(0.27612668,0.45852362){\color[rgb]{0,0,0}\makebox(0,0)[lt]{\begin{minipage}{0.1130461\unitlength}\raggedright $\scriptstyle{F_{i,j+1}}$\end{minipage}}}%
    \put(0.21417149,0.24305778){\color[rgb]{0,0,0}\makebox(0,0)[lt]{\begin{minipage}{0.1130461\unitlength}\raggedright $\scriptstyle{F_{i+1,j}}$\end{minipage}}}%
    \put(-0.00030695,0.48177935){\color[rgb]{0,0,0}\makebox(0,0)[lt]{\begin{minipage}{0.14833749\unitlength}\raggedright Case 1\end{minipage}}}%
    \put(0.00018303,0.29829077){\color[rgb]{0,0,0}\makebox(0,0)[lt]{\begin{minipage}{0.14981892\unitlength}\raggedright Case 2\end{minipage}}}%
    \put(0.00146833,0.10262036){\color[rgb]{0,0,0}\makebox(0,0)[lt]{\begin{minipage}{0.14257826\unitlength}\raggedright Case 3\end{minipage}}}%
    \put(0.21230268,0.45147214){\color[rgb]{0,0,0}\makebox(0,0)[lt]{\begin{minipage}{0.1130461\unitlength}\raggedright $\scriptstyle{F_{i,j}}$\end{minipage}}}%
    \put(0.21063253,0.09239119){\color[rgb]{0,0,0}\makebox(0,0)[lt]{\begin{minipage}{0.1130461\unitlength}\raggedright $\scriptstyle{F_{i,j}}$\end{minipage}}}%
    \put(0.2207184,0.50380331){\color[rgb]{0,0,0}\makebox(0,0)[lt]{\begin{minipage}{0.1130461\unitlength}\raggedright $\scriptstyle{\mathcal{F}_{i-1,j}}$\end{minipage}}}%
    \put(0.1450052,0.47763772){\color[rgb]{0,0,0}\makebox(0,0)[lt]{\begin{minipage}{0.1130461\unitlength}\raggedright $\scriptstyle{\mathcal{F}_{i,j-1}}$\end{minipage}}}%
    \put(0.16337678,0.15474323){\color[rgb]{0,0,0}\makebox(0,0)[lt]{\begin{minipage}{0.1130461\unitlength}\raggedright $\scriptstyle{\mathcal{F}_{i,j-1}}$\end{minipage}}}%
    \put(0.2149336,0.03449287){\color[rgb]{0,0,0}\makebox(0,0)[lt]{\begin{minipage}{0.1130461\unitlength}\raggedright $\scriptstyle{F_{i+1,j}}$\end{minipage}}}%
    \put(0.27338864,0.10853592){\color[rgb]{0,0,0}\makebox(0,0)[lt]{\begin{minipage}{0.1130461\unitlength}\raggedright $\scriptstyle{F_{i,j+1}}$\end{minipage}}}%
    \put(0.54862332,0.36406793){\color[rgb]{0,0,0}\makebox(0,0)[lt]{\begin{minipage}{0.1130461\unitlength}\raggedright $\scriptstyle{\mathcal{F}_{i-1,j}}$\end{minipage}}}%
    \put(0.52907331,0.31452033){\color[rgb]{0,0,0}\makebox(0,0)[lt]{\begin{minipage}{0.1130461\unitlength}\raggedright $\scriptstyle{\mathcal{F}'_{i,j}}$\end{minipage}}}%
    \put(0.56622054,0.24994143){\color[rgb]{0,0,0}\makebox(0,0)[lt]{\begin{minipage}{0.1130461\unitlength}\raggedright $\scriptstyle{F_{i+1,j}}$\end{minipage}}}%
    \put(0.46444071,0.49687575){\color[rgb]{0,0,0}\makebox(0,0)[lt]{\begin{minipage}{0.1130461\unitlength}\raggedright $\scriptstyle{\mathcal{F}_{i,j-1}}$\end{minipage}}}%
    \put(0.54838106,0.50770032){\color[rgb]{0,0,0}\makebox(0,0)[lt]{\begin{minipage}{0.1130461\unitlength}\raggedright $\scriptstyle{\mathcal{F}'_{i,j}}$\end{minipage}}}%
    \put(0.52221547,0.11632992){\color[rgb]{0,0,0}\makebox(0,0)[lt]{\begin{minipage}{0.1130461\unitlength}\raggedright $\scriptstyle{\mathcal{F}'_{i,j}}$\end{minipage}}}%
    \put(0.60630399,0.45648256){\color[rgb]{0,0,0}\makebox(0,0)[lt]{\begin{minipage}{0.1130461\unitlength}\raggedright $\scriptstyle{F_{i,j+1}}$\end{minipage}}}%
    \put(0.54116838,0.03950329){\color[rgb]{0,0,0}\makebox(0,0)[lt]{\begin{minipage}{0.1130461\unitlength}\raggedright $\scriptstyle{F_{i+1,j}}$\end{minipage}}}%
    \put(0.60964427,0.11855678){\color[rgb]{0,0,0}\makebox(0,0)[lt]{\begin{minipage}{0.1130461\unitlength}\raggedright $\scriptstyle{F_{i,j+1}}$\end{minipage}}}%
    \put(0.85258953,0.36406793){\color[rgb]{0,0,0}\makebox(0,0)[lt]{\begin{minipage}{0.1130461\unitlength}\raggedright $\scriptstyle{\mathcal{F}_{i-1,j}}$\end{minipage}}}%
    \put(0.84733681,0.31897405){\color[rgb]{0,0,0}\makebox(0,0)[lt]{\begin{minipage}{0.1130461\unitlength}\raggedright $\scriptstyle{\mathcal{F}'_{i,j}}$\end{minipage}}}%
    \put(0.88521808,0.23992057){\color[rgb]{0,0,0}\makebox(0,0)[lt]{\begin{minipage}{0.1130461\unitlength}\raggedright $\scriptstyle{F_{i+1,j}}$\end{minipage}}}%
    \put(0.77320314,0.48009518){\color[rgb]{0,0,0}\makebox(0,0)[lt]{\begin{minipage}{0.1130461\unitlength}\raggedright $\scriptstyle{\mathcal{F}_{i,j-1}}$\end{minipage}}}%
    \put(0.86014128,0.50770031){\color[rgb]{0,0,0}\makebox(0,0)[lt]{\begin{minipage}{0.1130461\unitlength}\raggedright $\scriptstyle{\mathcal{F}'_{i,j}}$\end{minipage}}}%
    \put(0.84009959,0.12523735){\color[rgb]{0,0,0}\makebox(0,0)[lt]{\begin{minipage}{0.1130461\unitlength}\raggedright $\scriptstyle{\mathcal{F}'_{i,j}}$\end{minipage}}}%
    \put(0.92363142,0.45314228){\color[rgb]{0,0,0}\makebox(0,0)[lt]{\begin{minipage}{0.1130461\unitlength}\raggedright $\scriptstyle{F_{i,j+1}}$\end{minipage}}}%
    \put(0.86072263,0.04284359){\color[rgb]{0,0,0}\makebox(0,0)[lt]{\begin{minipage}{0.1130461\unitlength}\raggedright $\scriptstyle{F_{i+1,j}}$\end{minipage}}}%
    \put(0.93031193,0.1157732){\color[rgb]{0,0,0}\makebox(0,0)[lt]{\begin{minipage}{0.1130461\unitlength}\raggedright $\scriptstyle{F_{i,j+1}}$\end{minipage}}}%
  \end{picture}%
\endgroup%

  \caption{Plabic graphs obtained by changing the filling of a box $b_{i,j}$ from 1 to 0 in the \Le-diagram.}
   \label{fig:lemma2}
\end{figure}

Part (b) follows from part (a) and Lemma~\ref{lemma0}.
\end{proof}

Next we present a procedure for constructing a quiver associated to an arbitrary \Le-diagram.  

\begin{thm}\label{quiverconstruction}
Let $D_{\mu}$ be a \Le-diagram of shape $\mu$.  Let $D_{\lambda}$ be a \Le-diagram filled only with 1's of rectangular shape $\lambda$ such that $\mu\subset \lambda$.  Let $Q_{\lambda}$ and $Q_{\mu}$ be the corresponding quivers.  Then $Q_{\mu}$ can be obtained from $Q_{\lambda}$ by applying the following steps. 

\medskip

\begin{enumerate}[1.]

\item Remove the vertices $v_{i,j}$ corresponding to boxes $b_{i,j}$ in $\lambda\setminus\mu$ and the arrows between the resulting frozen vertices to obtain a new quiver $Q_{\mu}'$. 

\item Merge the vertices of $Q_{\mu}'$ belonging to $S_{i,j}$ for every box $b_{i,j}$ of $D_{\mu}$ filled with 0. 

\end{enumerate}

\end{thm}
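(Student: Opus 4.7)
The proof is essentially a two-step composition of the preceding lemmas. The plan is to observe that Step~1 of the theorem is exactly Lemma~\ref{lemma1} applied to the pair of shapes $\mu \subset \lambda$, while Step~2 is exactly Lemma~\ref{lemma2} applied to the single diagram $D_\mu$ and its all-ones counterpart.

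More precisely, let $D_{\mu}^{\mathbf{1}}$ denote the \Le-diagram of shape $\mu$ all of whose boxes are filled with $1$'s, and let $Q_{\mu}^{\mathbf{1}}$ be its associated quiver. First, since $\mu \subset \lambda$ and both $D_\lambda$ and $D_{\mu}^{\mathbf{1}}$ are filled entirely with $1$'s, Lemma~\ref{lemma1}(b) applies directly: $Q_{\mu}^{\mathbf{1}}$ is obtained from $Q_\lambda$ by deleting the vertices $v_{i,j}$ corresponding to boxes in $\lambda \setminus \mu$ together with all arrows that now lie between frozen vertices. This is precisely Step~1 of the theorem, and it identifies the intermediate quiver $Q_\mu'$ with $Q_{\mu}^{\mathbf{1}}$.

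Next, I would apply Lemma~\ref{lemma2}(b) to the \Le-diagram $D_\mu$, playing the role of $D_\lambda$ in the statement of that lemma, with all-ones counterpart $D_{\mu}^{\mathbf{1}}$. This shows that $Q_\mu$ is obtained from $Q_\mu' = Q_{\mu}^{\mathbf{1}}$ by merging, for each $0$-box $b_{i,j}$ of $D_\mu$, the vertices in the set $S_{i,j}$. This is precisely Step~2 of the theorem. Composing the two operations yields $Q_\mu$ from $Q_\lambda$ as claimed.

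Since both lemmas have been established in detail, no further combinatorial work is needed, and there is no serious obstacle in this final proof. The only point worth stating explicitly is the identification $Q_\mu' = Q_{\mu}^{\mathbf{1}}$, to confirm that the sets $S_{i,j}$ used in Step~2 of the theorem (defined with reference to $D_\mu$) are the same as the sets to which Lemma~\ref{lemma2} is applied; this is immediate from the definitions, because the vertex labelling $v_{i,j}$ of faces of the plabic graph is intrinsic to the shape $\mu$ and does not depend on whether those boxes were originally carried inside a larger rectangle $\lambda$.
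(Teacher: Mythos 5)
Your proposal is correct and matches the paper's proof exactly: the paper also deduces the theorem directly by composing Lemma~\ref{lemma1}(b) (deletion, giving the all-ones quiver of shape $\mu$) with Lemma~\ref{lemma2}(b) (merging the sets $S_{i,j}$). Your explicit identification of $Q_\mu'$ with the all-ones quiver $Q_\mu^{\mathbf{1}}$ is a helpful clarification but does not change the argument.
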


 \begin{proof}
 The theorem follows directly from Lemma~\ref{lemma1}(b) and Lemma~\ref{lemma2}(b).
 \end{proof}
 
 \begin{exmp}
If $D_{\mu}$ is given in Figure~\ref{fig:Le}, then let $D_{\lambda}$ be the diagram of rectangular shape $4\times 5$ filled only with 1's.  The corresponding quiver $Q_{\lambda}$ is given in Figure~\ref{fig:grass}, where the frozen vertices are colored blue.  With the notation of Theorem~\ref{quiverconstruction} the quivers $Q_{\mu}'$, $Q_{\mu}$ are given in Figure~\ref{fig:exquiver}.  The sets $S_{i,j}$ are given in Example~\ref{S}, hence $Q_{\mu}$ is obtained from $Q_{\mu}'$ by merging the following sets of vertices.  Observe that the only nonfrozen vertex in $Q_{\mu}$ is $v_{1,2}$ obtained by merging vertices in $S_{1,3}$.  

$$\xymatrix@R=0pt{S_{1,1}=\{v_0, v_{1,1} \} && S_{2,3}\cup S_{3,2}= \{v_{2,2}, v_{2,3}, v_{3,2}\} \\
S_{1,3}=\{v_{1,2}, v_{1,3}\} && S_{3,1}\cup S_{4,1} =\{v_{2,1}, v_{3,1}, v_{4,1}\} \\
S_{1,5}=\{ v_{1,4}, v_{1,5}\} &&  }$$

Moreover, using Lemma~\ref{lemma1}(a) and Lemma~\ref{lemma2}(a) we can also obtain the corresponding plabic graph $G_{\mu}$ from the corresponding $G_{\lambda}$ given in Figure~\ref{fig:grass}.    The graph $G_{\mu}$ is given in Figure~\ref{fig:exquiver}, and we can see that $Q_{\mu}$ is the quiver associated to this plabic graph.  
 \medskip

\begin{figure}[htb]

\hspace{12cm}{\begingroup%
  \makeatletter%
  \def\svgwidth{120pt}
  \providecommand\color[2][]{%
    \errmessage{(Inkscape) Color is used for the text in Inkscape, but the package 'color.sty' is not loaded}%
    \renewcommand\color[2][]{}%
  }%
  \providecommand\transparent[1]{%
    \errmessage{(Inkscape) Transparency is used (non-zero) for the text in Inkscape, but the package 'transparent.sty' is not loaded}%
    \renewcommand\transparent[1]{}%
  }%
  \providecommand\rotatebox[2]{#2}%
  \ifx\svgwidth\undefined%
    \setlength{\unitlength}{201.625bp}%
    \ifx\svgscale\undefined%
      \relax%
    \else%
      \setlength{\unitlength}{\unitlength * \real{\svgscale}}%
    \fi%
  \else%
    \setlength{\unitlength}{\svgwidth}%
  \fi%
  \global\let\svgwidth\undefined%
  \global\let\svgscale\undefined%
  \makeatother%
  \begin{picture}(1,0.79764414)%
    \put(0,0){\includegraphics[width=\unitlength]{expos.pdf}}%
  \end{picture}%
\endgroup%
}

 \vspace{-4cm}\hspace{-6cm}$\xymatrix@C=10pt@R=10pt{{\color{blue} v_0} &&&&&& \\
v_{1,1}\ar[u] \ar[dr]&v_{1,2} \ar[l]\ar[dr]&v_{1,3}\ar[l]\ar[dr]&{\color{blue} v_{1,4}}\ar[l]&{\color{blue} v_{1,5}}&&& {\color{blue} v_0}&v_{1,2}\ar[l]\ar[dr]&{\color{blue} v_{1,4}}\ar[l]\\
  v_{2,1}\ar[u]\ar[dr] & {\color{blue} v_{2,2}}\ar[l]\ar[u] &{\color{blue}  v_{2,3}}\ar[u] & {\color{blue} v_{2,4}} & &&&{\color{blue} v_{2,1}}& {\color{blue} v_{2,2}}\ar[u]&{\color{blue} v_{2,4}}\\
   v_{3,1}\ar[u] & {\color{blue} v_{3,2}}\ar[l]& & & & &&& {\color{blue}  v_{4,2}}\\
   {\color{blue} v_{4,1}}\ar[u] &{\color{blue} v_{4,2}} & & &  }$

  \caption{The quiver $Q_{\mu}'$ (left), the corresponding $Q_{\mu}$ (middle), and the associated plabic graph $G_{\mu}$ (right).}
   \label{fig:exquiver}
\end{figure}

 \end{exmp}
 
\section{Existence of a green-to-red sequence}
In this section we prove the existence of a green-to-red sequence for quivers arising from \Le-diagrams. This enables us to deduce that quivers coming from reduced plabic graphs also admit such sequences.  As mentioned earlier, we will not construct the sequence directly. Rather, its existence follows from the following theorem:

\begin{thm}\label{mainthm}
Let $Q$ be a quiver arising from a \Le-diagram $D$ of shape $\lambda$, where $\lambda$ fits in a $k\times n$ rectangle. Let $D''$ be the \Le-diagram whose shape is a $k\times n$ rectangle containing all 1's, and let $Q''$ be the corresponding quiver. Then there is a sequence of vertex deletions and mutations taking $Q''$ to $Q$.
\end{thm}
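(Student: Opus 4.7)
My plan is to invoke Theorem~\ref{quiverconstruction}, which expresses $Q$ as the result of (1) deleting from $Q''$ the vertices corresponding to boxes in $\lambda\setminus\mu$, obtaining $Q_\mu'$, and (2) merging each subset $S_{i,j}$ for every box $b_{i,j}$ of $D_\mu$ filled with $0$.  Stage~(1) is already a sequence of vertex deletions, so the entire task reduces to realizing each merge in stage~(2) by mutations together with further vertex deletions.

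The key idea is a local principle about mutation.  Suppose a quiver contains an arrow $u\to v$ and $u$ has $v$ as its unique out-neighbor.  Then the mutation rule adds exactly one new arrow $a\to v$ for each arrow $a\to u$ (and no other shortcut arrows, precisely because $u$ has only one outgoing arrow), reverses all arrows incident to $u$, and creates no $2$-cycles with the other arrows at $v$; after subsequently deleting $u$, the resulting quiver is exactly what one obtains by merging $\{u,v\}$ at the vertex $v$.  A dual statement holds when $u$ is the only in-neighbor of $v$.  Using this observation, a chain of vertices joined by arrows can be collapsed into a single vertex by mutating and deleting from one endpoint at a time, provided the hypothesis is satisfied at each step.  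When the chain contains a frozen vertex, we apply the principle so that the frozen vertex is the one that survives.

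To execute stage~(2), I would process the $0$-boxes of $D_\mu$ in the order used in the proof of Lemma~\ref{lemma2} (row by row from top to bottom, left to right within each row), and, for each box, collapse the chain or L-shape $S_{i,j}$ via iterated application of the local principle; Case~3 (the L-shape including $F_0$) is handled by treating the row-branch and column-branch separately.  The main obstacle is the bookkeeping needed to guarantee that the ``unique outgoing (or incoming) arrow'' hypothesis is satisfied when we need it: at each intended mutation, we must verify that the competing chain-external arrows of the relevant endpoint have already been removed by either the stage~(1) deletions or by earlier stage~(2) mutation-and-delete steps.  This requires a local case analysis parallel to the three cases pictured in Figure~\ref{fig:lemma2}, and uses the rigid structure of the Grassmannian quiver $Q''$, in which the extra outgoing arrows of a chain endpoint always point toward vertices that either lie in $\lambda\setminus\mu$ (and are deleted in stage~(1)) or belong to an adjacent merge set processed earlier.
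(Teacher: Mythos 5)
Your reduction via Theorem~\ref{quiverconstruction} is the same first step the paper takes, and your ``local principle'' (mutate at $u$, then delete $u$, when $v$ is the unique out-neighbor of $u$; or dually) is correct as stated. The gap is exactly at the point you flagged as the main obstacle: the hypothesis of that principle fails in general, and your claim that the competing arrows of a chain endpoint ``always point toward vertices that either lie in $\lambda\setminus\mu$ or belong to an adjacent merge set processed earlier'' is false. In the quiver of the all-1's diagram every mutable vertex $v_{i,j}$ carries, besides its up and left arrows, a diagonal arrow $v_{i,j}\to v_{i+1,j+1}$ pointing into the region southeast of $b_{i,j}$ --- and precisely because the 0's are introduced in reading order, that region is untouched: it consists of 1-boxes whose vertices are neither deleted in stage (1) nor absorbed by any earlier merge. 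Concretely, take $\lambda$ the $4\times 4$ rectangle and $D$ with a single 0 in box $b_{1,2}$, so the only merge required is $S_{1,2}=\{v_{1,1},v_{1,2}\}$. Ignoring frozen vertices, $v_{1,2}$ has two out-neighbors, $v_{1,1}$ and $v_{2,3}$, and $v_{1,1}$ has two in-neighbors, $v_{1,2}$ and $v_{2,1}$, so neither form of your principle applies to either endpoint; and indeed mutating at $v_{1,2}$ and then deleting it yields the wrong quiver: the mutation creates shortcut arrows through the diagonal arrow $v_{1,2}\to v_{2,3}$ that cancel the existing arrows $v_{2,3}\to v_{2,2}$ and $v_{2,3}\to v_{1,3}$ in 2-cycles, and the arrow that should survive as $v_{1,1}\to v_{2,3}$ in the merged quiver $Q$ disappears with the deleted vertex. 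No reordering of the 0-boxes can repair this, since this example has only one merge.

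The idea you are missing is the paper's ``chain of squares'' argument, which is designed exactly to neutralize these diagonal arrows. Rather than deleting $v_{i,j}$, one mutates at $v_{i,j}$, then at $v_{i+1,j+1}$, then at $v_{i+2,j+2}$, and so on along the southeast diagonal until there is no vertex further southeast; each mutation makes the mutated vertex assume the role of the next diagonal vertex (the paper's ``sliding''), so the offending diagonal arrow is not an obstruction but is transported step by step to the boundary of the diagram, where the single superfluous vertex at the end of the chain is deleted (Figures \ref{fig:squaresend1}--\ref{fig:squaresend3}). After this relabeling, the net effect on the quiver is precisely the merge of $v_{i,j}$ with $v'$. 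So your single mutate-and-delete per merge must be replaced by a mutation sequence whose length is that of the whole diagonal chain, with the deletion performed at the far end of the chain rather than at $v_{i,j}$; only in the degenerate situation where $v_{i,j}$ has no mutable vertex to its southeast does your one-step principle coincide with the paper's procedure.
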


\begin{cor}\label{maincor}
Quivers associated to reduced plabic graphs admit a green-to-red sequence.
\end{cor}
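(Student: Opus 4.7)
The plan is to combine Theorem \ref{mainthm} with the two invariance properties of green-to-red sequences given by Theorems \ref{gtrdelete} and \ref{gtrmutate}, starting from the known case of the Grassmannian.

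First I would reduce to the case of quivers coming from \Le-diagrams. By the discussion at the end of Section~2, each move-equivalence class of reduced plabic graphs contains a unique representative coming from a \Le-diagram, and any two equivalent reduced plabic graphs produce mutation-equivalent quivers. Combined with Theorem \ref{gtrmutate}, this means it suffices to prove the existence of a green-to-red sequence for the quiver $Q$ associated to an arbitrary \Le-diagram $D$ of some shape $\lambda$.

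Next I would invoke Theorem \ref{mainthm}: if $\lambda$ fits in a $k \times n$ rectangle and $Q''$ denotes the quiver of the $k \times n$ rectangular \Le-diagram filled entirely with $1$'s, then $Q''$ can be carried to $Q$ by a sequence of vertex deletions and quiver mutations. The quiver $Q''$ is the standard Grassmannian quiver, and an explicit maximal green sequence (in particular, a green-to-red sequence) for it is constructed in \cite{MS}. Processing the sequence from Theorem \ref{mainthm} one step at a time, the existence of a green-to-red sequence is preserved after every vertex deletion by Theorem \ref{gtrdelete} and after every mutation by Theorem \ref{gtrmutate}. The final quiver $Q$ therefore admits a green-to-red sequence, giving the corollary.

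The bulk of the work is packaged into Theorem \ref{mainthm}, and that is where the real difficulty lies; the corollary itself is a clean combinatorial chaining once that theorem is in hand. The deletion half of Theorem \ref{mainthm} is essentially Lemma \ref{lemma1}(b); the substantive question is how to replace each vertex merge from Theorem \ref{quiverconstruction}(2) by a sequence of mutations followed by a deletion. The natural candidate is to mutate at one of the two vertices being merged so that its neighborhood coincides with that of its partner (up to cancellation of $2$-cycles) and then delete it, iterating over the sets $S_{i,j}$ associated to the $0$-filled boxes of $D$. Verifying that this produces exactly the merged quiver, and that the order in which the merges are carried out is compatible with the \Le-property, is the delicate technical step of the argument.
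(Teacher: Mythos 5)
Your proof is correct and follows essentially the same route as the paper's: reduce to the \Le-diagram representative via mutation equivalence, start from the Grassmannian quiver $Q''$ whose green-to-red (indeed maximal green) sequence comes from \cite{MS}, and propagate existence through the deletion-and-mutation sequence of Theorem \ref{mainthm} using Theorems \ref{gtrdelete} and \ref{gtrmutate}. Your closing remarks about where the real work lies (replacing each merge by mutations plus a deletion inside Theorem \ref{mainthm}) accurately describe the paper's strategy as well.
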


\begin{proof}[Proof of Corollary \ref{maincor}] Given a quiver associated to a reduced plabic graph, there exists a quiver $Q$ in its mutation class that arises from a \Le-diagram. The corresponding quiver $Q''$, as defined in the statement of Theorem \ref{mainthm}, admits a green-to-red sequence by \cite[Theorem 11.17]{MS}.  The rest follows directly from Theorem \ref{mainthm}: by Theorem \ref{gtrdelete}, deleting a vertex preserves the existence of a green-to-red sequence, and by Theorem \ref{gtrmutate}, mutating does as well.
\end{proof}

\begin{proof}[Proof of Theorem \ref{mainthm}]
First, let $Q'$ be the quiver corresponding to the \Le-diagram $D'$ of shape $\lambda$ filled with all 1's. By Lemma \ref{lemma1}, we may get to $Q'$ from $Q''$ by deleting the vertices of $Q''$ corresponding to cells of the rectangle that don't appear in $\lambda$. So it is sufficient to find a sequence of deletions and mutations taking $Q'$ to $Q$.

As in Lemma \ref{lemma2}, we can form $D$ from $D'$ by changing 1's to 0's one at a time, moving from left to right and top to bottom. We know from Lemma \ref{lemma2} that this operation corresponds on the quiver to merging vertices. We claim that the result of merging any pair of vertices that arises during this process can be obtained by a sequence of quiver mutations followed by a deletion.

Suppose we are replacing a 1 in box $b_{i,j}$ of $D'$ with a 0. In all cases, since we have not yet removed any 1's from the portion of the diagram southeast of $b_{i,j}$, that portion of $D'$ (and therefore $Q'$) looks exactly like the corresponding part of Figure \ref{fig:grass}, except possibly with vertices removed from the lower-right corner. (Recall also that we are free to ignore the frozen vertices, since they do not affect the existence of a green-to-red sequence.) We mutate at the vertex $v_{i,j}$, then $v_{i+1,j+1}$, and so on, moving southeast one step at a time, until there is no longer a vertex to the southeast of the last vertex used. It is helpful to move a vertex $v_{a,b}$ one step to the southeast as you mutate it so that it occupies the position of $v_{a+1,b+1}$, as in Figure \ref{fig:exmutation}.

\begin{figure}[htb]

\resizebox{\textwidth}{!}{
\begingroup%
  \makeatletter%
  \providecommand\color[2][]{%
    \errmessage{(Inkscape) Color is used for the text in Inkscape, but the package 'color.sty' is not loaded}%
    \renewcommand\color[2][]{}%
  }%
  \providecommand\transparent[1]{%
    \errmessage{(Inkscape) Transparency is used (non-zero) for the text in Inkscape, but the package 'transparent.sty' is not loaded}%
    \renewcommand\transparent[1]{}%
  }%
  \providecommand\rotatebox[2]{#2}%
  \ifx\svgwidth\undefined%
    \setlength{\unitlength}{1750.94558441bp}%
    \ifx\svgscale\undefined%
      \relax%
    \else%
      \setlength{\unitlength}{\unitlength * \real{\svgscale}}%
    \fi%
  \else%
    \setlength{\unitlength}{\svgwidth}%
  \fi%
  \global\let\svgwidth\undefined%
  \global\let\svgscale\undefined%
  \makeatother%
  \begin{picture}(1,0.17336094)%
    \put(0,0){\includegraphics[width=\unitlength,page=1]{quivers.pdf}}%
  \end{picture}%
\endgroup%

}
  \caption{An example of the sequence of mutations from the proof of Theorem \ref{mainthm}. At each step we are about to mutate at the circled vertex, which slides to the southeast in the following picture. At the end, the vertex marked with an X is deleted.}
   \label{fig:exmutation}
\end{figure}

After performing these mutations and sliding each mutated vertex one step to the southeast, the resulting quiver has a vertex at every position along the southeast-pointing chain of vertices starting from $v_{i,j}$, except $v_{i,j}$ itself, plus an extra vertex at the end. We delete this extra vertex.

Since placing a 0 in box $b_{i,j}$ of $D'$ leaves a valid \Le-diagram, there are only three possibilities: all boxes to the left of $b_{i,j}$ contain a 0, all boxes above it contain a 0, or both; the corresponding configurations in the plabic graph are the ones enumerated in Figure \ref{fig:lemma2}. In all three cases, there is a unique vertex in the portion of the quiver not southeast of $v_{i,j}$ which is incident to $v_{i,j}$, and that vertex has a single edge from $v_{i,j}$: in the first case, this vertex is above $v_{i,j}$, in the second case it is to the left, and in the third case it is northwest. (In the third case, it might be the frozen vertex $v_0$.) Call this vertex $v'$.

We claim that the result of the procedure, consisting of mutations and a deletion, described above is the same as the result of merging $v_{i,j}$ with $v'$. Since, as we saw in the proof of Lemma~\ref{lemma2}, these are the vertices that are merged when we place a 0 in box $b_{i,j}$, this is enough to prove the theorem.

\begin{figure}[htb]

\begin{subfigure}[t]{0.3\textwidth}
        \centering
        \resizebox{\textwidth}{!}{ \def\svgwidth{120pt} \begingroup%
  \makeatletter%
  \providecommand\color[2][]{%
    \errmessage{(Inkscape) Color is used for the text in Inkscape, but the package 'color.sty' is not loaded}%
    \renewcommand\color[2][]{}%
  }%
  \providecommand\transparent[1]{%
    \errmessage{(Inkscape) Transparency is used (non-zero) for the text in Inkscape, but the package 'transparent.sty' is not loaded}%
    \renewcommand\transparent[1]{}%
  }%
  \providecommand\rotatebox[2]{#2}%
  \ifx\svgwidth\undefined%
    \setlength{\unitlength}{346.4bp}%
    \ifx\svgscale\undefined%
      \relax%
    \else%
      \setlength{\unitlength}{\unitlength * \real{\svgscale}}%
    \fi%
  \else%
    \setlength{\unitlength}{\svgwidth}%
  \fi%
  \global\let\svgwidth\undefined%
  \global\let\svgscale\undefined%
  \makeatother%
  \begin{picture}(1,1.06305806)%
    \put(0,0){\includegraphics[width=\unitlength,page=1]{squares1.pdf}}%
    \put(0.04899372,1.00049491){\color[rgb]{0,0,0}\makebox(0,0)[lb]{\smash{$v'$}}}%
    \put(0,0){\includegraphics[width=\unitlength,page=2]{squares1.pdf}}%
    \put(0.30303529,0.79264273){\color[rgb]{0,0,0}\makebox(0,0)[lb]{\smash{$v_{i,j}$}}}%
  \end{picture}%
\endgroup%
}
        \caption{Before the first mutation\label{fig:chainofsquares1}}
    \end{subfigure}%
\hspace{0.1\textwidth}
\begin{subfigure}[t]{0.3\textwidth}
        \centering
        \resizebox{\textwidth}{!}{\def\svgwidth{120pt}  \begingroup%
  \makeatletter%
  \providecommand\color[2][]{%
    \errmessage{(Inkscape) Color is used for the text in Inkscape, but the package 'color.sty' is not loaded}%
    \renewcommand\color[2][]{}%
  }%
  \providecommand\transparent[1]{%
    \errmessage{(Inkscape) Transparency is used (non-zero) for the text in Inkscape, but the package 'transparent.sty' is not loaded}%
    \renewcommand\transparent[1]{}%
  }%
  \providecommand\rotatebox[2]{#2}%
  \ifx\svgwidth\undefined%
    \setlength{\unitlength}{290.4bp}%
    \ifx\svgscale\undefined%
      \relax%
    \else%
      \setlength{\unitlength}{\unitlength * \real{\svgscale}}%
    \fi%
  \else%
    \setlength{\unitlength}{\svgwidth}%
  \fi%
  \global\let\svgwidth\undefined%
  \global\let\svgscale\undefined%
  \makeatother%
  \begin{picture}(1,1.07521801)%
    \put(0,0){\includegraphics[width=\unitlength,page=1]{squares2.pdf}}%
    \put(0.05844154,1.00059035){\color[rgb]{0,0,0}\makebox(0,0)[lb]{\smash{$v'$}}}%
    \put(0,0){\includegraphics[width=\unitlength,page=2]{squares2.pdf}}%
    \put(0.49724518,0.58539949){\color[rgb]{0,0,0}\makebox(0,0)[lb]{\smash{$v_{i,j}$}}}%
  \end{picture}%
\endgroup%
}
        \caption{After mutating at $v_{i,j}$.\label{fig:chainofsquares2}}
    \end{subfigure}%

  \caption{The result of mutating at the upper-left corner of a ``chain of squares,'' as described in the proof of Theorem \ref{mainthm}.}
\end{figure}

To see this, first note that the result of our sequence of mutations depends only on the part of the quiver incident to the vertices in the southeast-pointing chain from $v_{i,j}$. Since the entire portion of the quiver to the southeast of $v_{i,j}$ looks the same as in the all-1's \Le-diagram $Q'$, this part of the quiver looks like a ``chain of squares,'' as in Figure \ref{fig:chainofsquares1}. After mutating at $v_{i,j}$, we see that the edges incident to it now look like those that used to be incident to $v_{i+1,j+1}$, that $v'$ now has all of its old edges plus the edges that used to be incident to $v_{i,j}$ (before canceling 2-cycles), and that the portion of the quiver touching the vertices in the southeast-pointing chain from $v_{i+1,j+1}$ now forms a new chain-of-squares quiver, with $v_{i,j}$ now serving the role of $v'$. We continue in this way until we arrive at the end of the diagram.

\begin{figure}[htb]

\begin{subfigure}[t]{0.2\textwidth}
        \centering
        \resizebox{\textwidth}{!}{ \def\svgwidth{120pt} \begingroup%
  \makeatletter%
  \providecommand\color[2][]{%
    \errmessage{(Inkscape) Color is used for the text in Inkscape, but the package 'color.sty' is not loaded}%
    \renewcommand\color[2][]{}%
  }%
  \providecommand\transparent[1]{%
    \errmessage{(Inkscape) Transparency is used (non-zero) for the text in Inkscape, but the package 'transparent.sty' is not loaded}%
    \renewcommand\transparent[1]{}%
  }%
  \providecommand\rotatebox[2]{#2}%
  \ifx\svgwidth\undefined%
    \setlength{\unitlength}{208.54558441bp}%
    \ifx\svgscale\undefined%
      \relax%
    \else%
      \setlength{\unitlength}{\unitlength * \real{\svgscale}}%
    \fi%
  \else%
    \setlength{\unitlength}{\svgwidth}%
  \fi%
  \global\let\svgwidth\undefined%
  \global\let\svgscale\undefined%
  \makeatother%
  \begin{picture}(1,1.27741862)%
    \put(0,0){\includegraphics[width=\unitlength,page=1]{squares3.pdf}}%
  \end{picture}%
\endgroup%
}

        \caption{If the final vertex lies along an edge.\label{fig:squaresend1}}
    \end{subfigure}%
\hspace{0.05\textwidth}
\begin{subfigure}[t]{0.2\textwidth}
        \centering
        \resizebox{\textwidth}{!}{\def\svgwidth{120pt}  \begingroup%
  \makeatletter%
  \providecommand\color[2][]{%
    \errmessage{(Inkscape) Color is used for the text in Inkscape, but the package 'color.sty' is not loaded}%
    \renewcommand\color[2][]{}%
  }%
  \providecommand\transparent[1]{%
    \errmessage{(Inkscape) Transparency is used (non-zero) for the text in Inkscape, but the package 'transparent.sty' is not loaded}%
    \renewcommand\transparent[1]{}%
  }%
  \providecommand\rotatebox[2]{#2}%
  \ifx\svgwidth\undefined%
    \setlength{\unitlength}{208.54558441bp}%
    \ifx\svgscale\undefined%
      \relax%
    \else%
      \setlength{\unitlength}{\unitlength * \real{\svgscale}}%
    \fi%
  \else%
    \setlength{\unitlength}{\svgwidth}%
  \fi%
  \global\let\svgwidth\undefined%
  \global\let\svgscale\undefined%
  \makeatother%
  \begin{picture}(1,1.03836093)%
    \put(0,0){\includegraphics[width=\unitlength,page=1]{squares4.pdf}}%
  \end{picture}%
\endgroup%
}

        \caption{If the final vertex is on an outside corner.\label{fig:squaresend2}}
    \end{subfigure}%
\hspace{0.05\textwidth}
\begin{subfigure}[t]{0.2\textwidth}
        \centering
        \resizebox{\textwidth}{!}{\def\svgwidth{120pt}  \begingroup%
  \makeatletter%
  \providecommand\color[2][]{%
    \errmessage{(Inkscape) Color is used for the text in Inkscape, but the package 'color.sty' is not loaded}%
    \renewcommand\color[2][]{}%
  }%
  \providecommand\transparent[1]{%
    \errmessage{(Inkscape) Transparency is used (non-zero) for the text in Inkscape, but the package 'transparent.sty' is not loaded}%
    \renewcommand\transparent[1]{}%
  }%
  \providecommand\rotatebox[2]{#2}%
  \ifx\svgwidth\undefined%
    \setlength{\unitlength}{266.4bp}%
    \ifx\svgscale\undefined%
      \relax%
    \else%
      \setlength{\unitlength}{\unitlength * \real{\svgscale}}%
    \fi%
  \else%
    \setlength{\unitlength}{\svgwidth}%
  \fi%
  \global\let\svgwidth\undefined%
  \global\let\svgscale\undefined%
  \makeatother%
  \begin{picture}(1,1.00000005)%
    \put(0,0){\includegraphics[width=\unitlength,page=1]{squares5.pdf}}%
  \end{picture}%
\endgroup%
}
        \caption{If the final vertex is on an inside corner.\label{fig:squaresend3}}
    \end{subfigure}%

  \caption{The three possible ending states of a sequence of mutations along a ``chain of squares,'' as described in the proof of Theorem \ref{mainthm}.}
\end{figure}

At the end of the chain of squares, if the final vertex was on the bottom or right edge it will have a single incoming edge from the next vertex along that edge. If the final vertex is an outside corner of the diagram --- that is, there are no vertices to its south or east --- there is no such vertex. (Note that in both cases we are disregarding the frozen vertices.) Finally, if the final vertex is an inside corner, so there is a vertex to the south and east but not to the southeast, then there are two such vertices. In the first case, at the end of the sequence of mutations, we are left with the picture in Figure \ref{fig:squaresend1}, in the second we are left with the picture in Figure \ref{fig:squaresend2}, and in the third we have Figure \ref{fig:squaresend3}. In all of these cases, we delete the vertex marked with an X, and we are left with the same local picture as before the sequence of mutations began. Therefore, the final result of this process is that $v'$ and $v_{i,j}$ have been merged, and the rest of the diagram is as it was before. This completes the proof.
\end{proof}

\end{document}